\newtheorem{theorem}{Theorem}[section]
\newtheorem{proposition}[theorem]{Proposition}
\newtheorem{lemma}[theorem]{Lemma}
\newtheorem{corollary}[theorem]{Corollary}
\DeclareMathOperator{\W}{W}
\DeclareMathOperator{\FH}{FH}
\DeclareMathOperator{\V}{V}
\DeclareMathOperator{\Vol}{Vol}
\DeclareMathOperator{\Area}{Area}
\DeclareMathOperator{\Ricci}{Ricci}
\DeclareMathOperator{\sys}{sys}
\DeclareMathOperator{\MCV}{MCV}
\theoremstyle{definition}
\newtheorem{definition}[theorem]{Definition}
\newtheorem{notation}[theorem]{Notation}
\newtheorem{remark}[theorem]{Remark}
\title{Width of conformal metrics}
\author{Parker Glynn-Adey}
\author{Yevgeny Liokumovich}
\begin{document}

\title{Width, Ricci curvature, and minimal hypersurfaces}

\newcommand{\R}{\mathbb{R}}
\newcommand{\RP}{\mathbb{RP}}
\newcommand{\Z}{\mathbb{Z}}

\maketitle

\begin{abstract}
    Let $(M,g_0)$ be a closed
    Riemannian manifold of dimension $n$, for $3 \leq n \leq 7$, and
    non-negative Ricci curvature.
    Let $g = \phi^2 g_0$ be a metric in the
    conformal class of $g_0$.
    We show that there exists a smooth closed
    embedded minimal hypersurface in $(M,g)$
    of volume bounded by $C(n) V^{\frac{n-1}{n}}$,
    where $V$ is the total volume of $(M,g)$. When $Ric(M,g_0) \geq -(n-1)$
    we obtain a similar bound with constant $C$
    depending only on $n$ and the volume of $(M,g_0)$.
     
    Our second result concerns manifolds $(M,g)$
    of positive Ricci curvature and dimension at most seven.
    We obtain an effective version of a theorem
    of F. C. Marques and A. Neves on the existence of infinitely
    many minimal hypersurfaces on $(M,g)$ .
    We show that for any such
    manifold there exists $k$ minimal hypersurfaces
    of volume at most $C_n  V \left( \sys_{n-1}(M)\right)^{-\frac{1}{n-1}} k ^ {\frac{1}{n-1}}$,
    where $V$ denotes the volume of $(M,g_0)$ and $\sys_{n-1}(M)$
    is the smallest volume of a non-trivial minimal
    hypersurface.
\end{abstract}

\section{Introduction}
\label{sec:Introduction}

\subsection{Results}

In \cite{Pitts1981} Pitts proved existence of
a smooth closed embedded minimal hypersurface
in any closed Riemannian manifold $M$
of dimension $n$, for $3 \leq n \leq 5$.
This result was extended to manifolds of dimension
$n \leq 7$ by Schoen and Simon \cite{SchoenSimon}.
Our first main result is a bound on the volume
of this hypersurface for certain conformal classes
of Riemannian metrics.


\begin{theorem} \label{hypersurface}
    Suppose $M_0$ is a closed Riemannian manifold
    of dimension $n$, for $3 \leq n \leq 7$.
    If $M$ is in the conformal class of $M_0$
    then $M$ contains a smooth
    closed embedded minimal hypersurface $\Sigma$
    with volume bounded above by
    $C(M_0) \Vol(M)^{\frac{n-1}{n}}$.
    When $\Ricci(M_0) \geq 0$ the constant $C(M_0)$
    is an absolute constant that depends only on $n$.
    In general, for $M_0$ with $\Ricci(M_0) \geq -(n-1)a^2$ we can take 
    $C(M_0)=C(n) \max \{1,a \Vol(M_0)^ {\frac{1}{n}}\}$.
    
    If $n>7$ the same upper bound will hold
    for the $(n-1)$-volume of a closed minimal 
    hypersurface with singularities of dimension
    at most $n-8$.
\end{theorem}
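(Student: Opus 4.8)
The plan is to bound from above the one-parameter Almgren--Pitts width $\W(M)$ of $M$ by $C(M_0)\Vol(M)^{\frac{n-1}{n}}$ and then to invoke min-max existence. Indeed, by the Almgren--Pitts min-max theorem together with the regularity theorems of Pitts (for $3\le n\le 5$) and Schoen--Simon (for $n\le 7$, and a singular set of dimension at most $n-8$ when $n\ge 8$), the number $\W(M)$ is attained as the total mass of a stationary integral varifold whose support is a closed embedded minimal hypersurface $\Sigma$ with the stated regularity, possibly carrying integer multiplicities; in particular $\Vol(\Sigma)\le\W(M)$. Since $\W(M)=\inf_{\{\Sigma_t\}}\sup_t\Vol_g(\Sigma_t)$, the infimum being over $1$-sweepouts of $M$ (continuous families of mod $2$ relative cycles sweeping $M$ out once), everything reduces to exhibiting one efficient sweepout.

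Write $g=\phi^2 g_0$ and $V=\Vol(M,g)=\int_{M_0}\phi^n\,dV_{g_0}$, and fix once and for all a smooth triangulation $T$ of $M_0$ (equivalently, a fixed finite good cover). The engine of the proof is a conformal analogue of Guth's Euclidean width--volume inequality, applied one simplex at a time: in an $n$-simplex $\sigma$, equipped with affine coordinates and the metric $g$, one recursively bisects $\sigma$ into two subsimplices of equal $g$-volume along an affine hyperplane chosen --- by a Fubini and H\"older argument --- to have $(n-1)$-volume at most $C(n)\,\operatorname{diam}(\sigma)^{-1}\big(\int_\sigma\phi^n\big)^{\frac{n-1}{n}}\Vol_{\mathrm{eucl}}(\sigma)^{\frac1n}$, stopping the recursion once a piece has $g$-volume below a fixed threshold and sweeping such a piece out by affine slices at negligible cost. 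Since each bisection halves the $g$-volume, the cutting walls met along any branch of the subdivision tree form a geometric series of ratio $2^{-\frac{n-1}{n}}<1$ and hence sum to at most $C(n)$ times the shape constant of $\sigma$ times $\big(\int_\sigma\phi^n\big)^{\frac{n-1}{n}}$. Concatenating and interpolating these piecewise sweepouts over the simplices of $T$ produces a sweepout of $M_0$ with $\sup_t\Vol_g(\Sigma_t)\le C_1(T,n)\,V^{\frac{n-1}{n}}$; as $T$ depends only on $M_0$, this already proves the first assertion of the theorem, with $C(M_0)=C_1(T,n)$ and no curvature hypothesis.

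To improve $C(M_0)$ to an absolute constant $C(n)$ when $\Ricci(M_0)\ge 0$, and to $C(n)\max\{1,a\Vol(M_0)^{\frac1n}\}$ when $\Ricci(M_0)\ge-(n-1)a^2$ (after rescaling $g_0$ so that $a\in\{0,1\}$; note that both this quantity and $V^{\frac{n-1}{n}}$ are scale invariant), the fixed triangulation must be replaced by a scaffolding adapted to $g_0$ whose combinatorics and whose comparison constants to Euclidean geometry depend only on $n$ and on the scale-invariant quantity $a\Vol(M_0)^{\frac1n}$. For this one decomposes $(M_0,g_0)$ into metric balls and geodesic annuli and bisects along geodesic spheres: the Bishop--Gromov volume comparison controls the $g_0$-volumes of the spheres used as cutting walls and the number of pieces occurring at each volume scale, the Laplacian comparison for the distance function $d_p$ controls the shape of the pieces, and the coarea inequality is used to choose cutting radii along which the walls are small; the H\"older and geometric-series bookkeeping of the previous step then goes through with constants of the asserted form.

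The routine parts are Step 1 and the accounting; the crux is the construction in the Ricci-curvature case. Under only a lower Ricci bound there is no control of the injectivity radius or of the sectional curvature, the exponential map need not be a diffeomorphism past the cut locus, and geodesic balls need not be close to Euclidean, so the main difficulty is to organize a decomposition of $(M_0,g_0)$ that is simultaneously sweepout-simple, of bounded overlap at each $g$-volume scale, and comparable to Euclidean balls with purely dimensional constants --- together with selecting, via coarea, cutting walls whose $g$-volume is small. This is the technical heart of the argument, and it is exactly where the volume comparison geometry of $\Ricci\ge -(n-1)a^2$ is used.
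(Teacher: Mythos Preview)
Your reduction to a width estimate via Almgren--Pitts and Schoen--Simon, the H\"older/conformal-invariance mechanism $\|\nabla f\|_g^n\,dV_g=\|\nabla f\|_{g_0}^n\,dV_{g_0}$, and the geometric-series bookkeeping are all exactly the paper's strategy. The gap is in the subdivision step of your Part~1: you claim a Fubini--H\"older argument produces an affine hyperplane that \emph{bisects} the $g$-volume of the simplex and simultaneously has small $g$-area. But coarea plus H\"older only bounds $\int\Vol_g(f^{-1}(t))\,dt$, i.e.\ the \emph{average} slice; if the $g$-mass is concentrated near one end of the range of $f$, the bisecting value of $t$ lies in a short interval on which every slice may be large, and there is no reason the small-area slices and the balanced slices overlap. (Recall that already for open sets in $\R^2$ every pencil of parallel lines can contain arbitrarily long slices---this is why Guth's Euclidean width inequality needs the ``bending planes'' construction and is not a consequence of affine slicing.) Without a genuine volume split at each stage the geometric series collapses. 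A minor additional issue: an affine slice of a simplex does not yield two simplices, so the recursion does not preserve uniform shape constants.

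The paper's fix---which handles the general and the curvature case in one stroke, so there is no separate Part~1/Part~2---is to cut with $g_0$-metric \emph{balls} rather than affine hyperplanes. By continuity in the radius there is always a ball $B^0(a,r)$ capturing exactly a prescribed fraction of the $g$-volume; a Colbois--Maerten packing argument, using only Bishop--Gromov for $\Ricci(g_0)\ge -(n-1)$, then selects the centre so that the annulus around $B^0(a,r)$ has controlled $g_0$-volume while the complement still carries a definite $g$-fraction (Lemma~\ref{subdivision}). Coarea plus H\"older applied to the $g_0$-distance function on this annulus now produces a $g_0$-sphere of small $g$-area that, by construction, already separates two pieces each of $g$-volume at least $25^{-n}\Vol_g(U)$ (Theorem~\ref{thm: isoperimetric}). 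No Laplacian comparison is needed. The base of the induction is also different from your sketch: once a piece has sufficiently small $g$-volume, Proposition~\ref{prop: base of induction} bounds its $\partial$-width by $3\Vol_{n-1}$ of its boundary (using only that Euclidean hyperplanes are area-minimizing), rather than by its volume.
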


Theorem \ref{hypersurface}
follows from a bound on the width of $M$.
One can find background and
many results about widths of manifolds in \cite[App.1F]{Gromov1983}, \cite{Guth2007},
\cite{BalacheffSabourau2010}, \cite{MarquesNeves2013}.
Informally, the width $\W(M)$ of a manifold $M$
is the smallest number such that every sweep-out
of $M$ by hypersurfaces contains a hypersurface
of volume at least $\W(M)$. We give a precise definition of width in
Section \ref{sec: definitions}.

To state our bound on the width of manifolds
it will be convenient to define a conformal 
invariant called the min-conformal volume. 
This invariant was recently introduced in a
work of Hassannezhad \cite{Hassannezhad}.

\begin{definition}\label{MCV}
Let $M$ be a compact Riemannian manifold.
Define the min-conformal volume of $M$ to be: $\MCV(M) = \inf \{ \Vol(M') \}$, where the infimum
is taken over all manifolds $M'$ in the conformal class of $M$ with 
$\Ricci (M') \geq -(n-1)$.
\end{definition}

\begin{theorem} \label{thm: main}
    Let $M$ be a closed Riemannian manifold of dimension $n$
    then 
    \[
    \W(M) \leq
    C(n) \max \{1,\MCV(M)^{\frac{1}{n}}\} \Vol(M)^{\frac{n-1}{n}}
    \]
\end{theorem}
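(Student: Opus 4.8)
The plan is to reduce the general case to the case of a metric with Ricci lower bound $-(n-1)$ via the conformal invariant $\MCV$, and then to bound the width of such a metric by a sweep-out constructed from a controlled covering of the manifold. The starting observation is that width is \emph{not} a conformal invariant, but if $g = \phi^2 g_0$ and $g_0'$ is a competitor in the definition of $\MCV(M)$, it suffices to prove the bound for $g_0'$ up to a scaling argument that trades volume against the $\frac{n-1}{n}$ exponent; so I would first fix $M' = (M, g_0')$ in the conformal class with $\Ricci(M') \geq -(n-1)$ and $\Vol(M') \leq \MCV(M) + \epsilon$, and note that $M$ and $M'$ share the same underlying smooth manifold.

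Next I would produce a sweep-out of $M$ with controlled maximal slice volume. The key tool is a covering of $M'$ by metric balls: using the Ricci lower bound $-(n-1)$ together with Bishop--Gromov volume comparison, I can choose a maximal collection of disjoint balls $B(p_i, r)$ of a fixed radius $r$ (to be optimized), so that the doubled balls $B(p_i, 2r)$ cover $M'$, and the number $N$ of such balls is controlled: $N \leq C(n)\, e^{(n-1) \cdot 2r}\, \Vol(M')\, r^{-n}$ for $r$ not too large, since each ball $B(p_i,r)$ has volume at least a dimensional constant times $r^n$ (for small $r$) while being disjoint. Passing to $M = (M,\phi^2 g_0')$, each ball $B'(p_i, 2r)$ is some open set $U_i$ in $M$, and $\{U_i\}$ is an open cover of $M$ of multiplicity bounded by a dimensional constant (by the usual packing argument, since the $B'(p_i, r)$ are disjoint and the Ricci bound controls how many can be close together).

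Now I would build the sweep-out slice by slice over this cover: order the $U_i$, and sweep out each $U_i$ one at a time by the level sets of a Morse function, using at each stage boundary slices of the union $\bigcup_{j \le i} U_j$. The $(n-1)$-volume of any intermediate slice is then bounded by the volume of $\partial U_i$ in the metric $g$ plus the volumes of the already-completed boundary pieces $\partial U_j$, $j<i$, which in turn is controlled because the multiplicity of the cover is bounded. This is the standard mechanism (as in Guth's and Balacheff--Sabourau's work on width and in Marques--Neves) for converting a bounded-multiplicity cover into a sweep-out whose width is at most $C(n) \max_i \Vol_{n-1}(\partial U_i)$ plus lower-order terms. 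The crucial point is to bound $\Vol_{g,n-1}(\partial B'(p_i,2r))$; here I would invoke a coarea / mean-value argument in $M'$ to choose the radii so that $\Vol_{g_0', n-1}(\partial B'(p_i, \rho_i)) \le C(n)\, r^{-1}\, \Vol_{g_0'}(B'(p_i, 2r))$ for some $\rho_i \in [r, 2r]$, and then relate this to the $g$-volume. Summing and optimizing $r$ against $\Vol(M)$ and $\Vol(M') \approx \MCV(M)$ yields the exponent $\frac{n-1}{n}$ and the stated dependence on $\MCV(M)^{1/n}$; the $\max\{1, \MCV(M)^{1/n}\}$ arises from the two regimes of whether the optimal $r$ is small or of order one.

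The main obstacle I anticipate is controlling the $g$-volume (not the $g_0'$-volume) of the boundary hypersurfaces $\partial B'(p_i, \rho_i)$, since the conformal factor $\phi$ is arbitrary and could concentrate volume near these boundaries. The resolution is that we have freedom in choosing $\rho_i \in [r,2r]$ independently for each $i$: by the coarea formula applied to the distance function in $M'$ but measuring $(n-1)$-volume in the metric $g = \phi^2 g_0'$, the average over $\rho \in [r,2r]$ of $\Vol_{g,n-1}(\partial B'(p_i,\rho))$ is at most $r^{-1} \Vol_g(B'(p_i,2r))$, so a good radius exists for each $i$ simultaneously. Then $\sum_i \Vol_g(B'(p_i,2r)) \le C(n) \Vol_g(M) = C(n)\Vol(M)$ by bounded multiplicity, and the number of balls $N$ brings in the $\MCV(M)$ dependence; balancing these two competing terms (many small balls give small individual slices but more of them to sweep through, controlled by multiplicity rather than $N$ in the end) is the delicate accounting. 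A secondary technical point is ensuring the pieced-together sweep-out is a genuine admissible family in the sense of the width definition of Section \ref{sec: definitions} — this is routine but must be checked, approximating the Morse-theoretic slices by the relevant class of competitors and verifying the non-triviality (sweep-out) condition.
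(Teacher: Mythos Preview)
Your coarea claim is wrong, and this is the central gap. If $f=d_{g_0'}(p_i,\cdot)$ and $g=\phi^2 g_0'$, then the coarea formula gives
\[
\int_r^{2r}\Vol_{g,n-1}\bigl(\{f=t\}\bigr)\,dt \;=\;\int_{\{r<f<2r\}}\|\nabla_g f\|_g\,dV_g
\;=\;\int_{\{r<f<2r\}}\phi^{-1}\,dV_g
\;=\;\int_{\{r<f<2r\}}\phi^{\,n-1}\,dV_{g_0'},
\]
which is \emph{not} bounded by $\Vol_g$ of the annulus (that would require $\phi\ge 1$). The correct move is H\"older with exponents $n$ and $\tfrac{n}{n-1}$, yielding
\[
\int_{\{r<f<2r\}}\phi^{\,n-1}\,dV_{g_0'}\;\le\;\Vol_g\bigl(\{r<f<2r\}\bigr)^{\frac{n-1}{n}}\,\Vol_{g_0'}\bigl(\{r<f<2r\}\bigr)^{\frac{1}{n}},
\]
equivalently the conformal invariance of $\|\nabla f\|^n\,dV$. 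This mixed bound is exactly the paper's ``length--area'' step (Theorem~\ref{thm: isoperimetric}); without it you never see why the exponent on $\MCV(M)$ is $1/n$.

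There is a second problem. Even with the corrected boundary bound, your sweep of each annular piece $U_i\setminus\bigcup_{j<i}U_j$ ``by level sets of a Morse function'' gives no control on slice area in the $g$-metric: a single $g_0'$-ball of radius $2r$ can carry essentially all of $\Vol_g(M)$ when $\phi$ concentrates there, and an arbitrary Morse slicing of that region can have arbitrarily large $g$-area. The paper avoids this by \emph{not} fixing a single $g_0'$-scale covering. Instead it proves an isoperimetric inequality (Theorem~\ref{thm: isoperimetric}) that cuts any region $U$ into two pieces each carrying a definite fraction of $\Vol_g(U)$, with cut area $\le c(n)\max\{1,\Vol_{g_0'}(U)^{1/n}\}\Vol_g(U)^{(n-1)/n}$, and then recurses (Theorem~\ref{thm: main with boundary}) until the pieces have small enough $g$-volume that they sit in almost-Euclidean balls and can be swept out by Proposition~\ref{prop: base of induction}. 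The recursion in $g$-volume, not a fixed $g_0'$-scale cover, is what handles concentration of the conformal factor.
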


\begin{corollary}
    Let $M$ be a closed Riemannian manifold of dimension $n$, $a\geq 0$
    and suppose that $\Ricci(M)\geq -(n-1)a^2$. Then 
    \[
    \W(M) \leq
    C(n) \max \{1,\ a \Vol(M)^{\frac{1}{n}}\} \Vol(M)^{\frac{n-1}{n}}
    \]
\end{corollary}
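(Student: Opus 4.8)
The plan is to deduce the Corollary from Theorem \ref{thm: main} by controlling the min-conformal volume $\MCV(M)$ in terms of $a$ and $\Vol(M)$. The key observation is that if $\Ricci(M) \geq -(n-1)a^2$, then we may simply rescale the metric to put it in the form required by Definition \ref{MCV}. Concretely, if $a > 0$, consider the metric $g' = a^2 g$ on $M$. Scaling the metric by a constant $\lambda^2$ multiplies the Ricci curvature tensor by $\lambda^{-2}$ (as a $(0,2)$-tensor the Ricci tensor is scale-invariant, but the bound $\Ricci \geq -(n-1)$ is read against the metric, so the effective lower bound scales), so $\Ricci(g') \geq -(n-1)$. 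Moreover $g'$ is trivially in the conformal class of $g$, being a constant multiple of it. Since $\dim M = n$, we have $\Vol(M, g') = a^n \Vol(M, g)$. Therefore, by definition of the infimum,
\[
\MCV(M) \leq \Vol(M,g') = a^n \Vol(M,g).
\]

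Substituting this into Theorem \ref{thm: main} gives
\[
\W(M) \leq C(n) \max\{1, (a^n \Vol(M))^{1/n}\} \Vol(M)^{\frac{n-1}{n}}
= C(n) \max\{1, a\, \Vol(M)^{1/n}\} \Vol(M)^{\frac{n-1}{n}},
\]
which is exactly the claimed bound. In the degenerate case $a = 0$, the hypothesis $\Ricci(M) \geq 0$ means $M$ itself already satisfies $\Ricci \geq -(n-1)$ (vacuously, $0 \geq -(n-1)$), so $\MCV(M) \leq \Vol(M)$, and Theorem \ref{thm: main} gives $\W(M) \leq C(n)\max\{1,\Vol(M)^{1/n}\}\Vol(M)^{\frac{n-1}{n}}$, which matches the stated inequality with $a=0$ since $\max\{1, 0\} = 1$.

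I do not anticipate a genuine obstacle here: the entire content of the Corollary is the elementary scaling computation above, and everything hard has been absorbed into Theorem \ref{thm: main}. The only point requiring minor care is the bookkeeping of how a curvature lower bound behaves under constant rescaling of the metric — one must track that multiplying $g$ by $a^2$ changes a lower Ricci bound of $-(n-1)a^2$ into $-(n-1)$, and correspondingly multiplies the volume by $a^n$ — but this is standard and introduces no new difficulty.
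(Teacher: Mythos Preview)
Your scaling argument for $a>0$ is exactly the intended derivation from Theorem~\ref{thm: main}; the paper states the Corollary without proof because this rescaling is immediate.

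There is, however, a genuine slip in your treatment of the case $a=0$. From $\MCV(M)\leq \Vol(M)$ Theorem~\ref{thm: main} only yields
\[
\W(M)\leq C(n)\max\{1,\Vol(M)^{1/n}\}\Vol(M)^{\frac{n-1}{n}},
\]
which does \emph{not} match the stated bound $C(n)\max\{1,0\}\Vol(M)^{\frac{n-1}{n}}=C(n)\Vol(M)^{\frac{n-1}{n}}$ when $\Vol(M)>1$. The fix is to observe that if $\Ricci(M,g)\geq 0$, then for every $\lambda>0$ the rescaled metric $g_\lambda=\lambda^2 g$ still satisfies $\Ricci(g_\lambda)\geq 0\geq -(n-1)$, while $\Vol(M,g_\lambda)=\lambda^n\Vol(M,g)\to 0$; hence $\MCV(M)=0$ and Theorem~\ref{thm: main} gives $\W(M)\leq C(n)\Vol(M)^{\frac{n-1}{n}}$ as required. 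Equivalently, you may simply note that $\Ricci\geq 0$ implies $\Ricci\geq -(n-1)a^2$ for every $a>0$, apply your $a>0$ argument, and let $a\to 0$.
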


More generally, Theorem \ref{thm: main} holds
if we replace the min-conformal volume in the estimate
by the infimum of $\{d \Vol(M')\}$, where $d$ is any positive integer,
such that $M$ admits a degree $d$ conformal branched covering 
onto a manifold $M'$ with $\Ricci (M') \geq -(n-1)$. 

The conformal invariant $\MCV$ is somewhat reminiscent of 
(but different from) the
conformal volume studied by Li and Yau in \cite{LiYau1982}.
$\MCV(M)$ can be computed explicitly when $M$ is a Riemannian surface of genus $g$.
By the uniformization theorem $M$ is conformally equivalent to a surface $M_0$
of constant curvature $1$ (if $g=0$), $0$ (if $g=1$) or $-1$ (if $g \geq 2$). When the genus is $0$ or $1$ it follows that
$\MCV(M)=0$. 
When $g\geq 2$ 
and the Gaussian curvature 
of a surface $M'$ satisfies $K \geq -1$ then by Gauss-Bonnet theorem $\Area(M') \geq 4 \pi(g-1)$ with equality holding exactly when 
$K=-1$ everywhere. We conclude that $\MCV(M)=4 \pi(g-1)$


Theorem \ref{thm: main} then implies that for any surface $M$ of genus $g$ we have 
\[ \W(M) \leq C \sqrt{(g+1) \Area(M)}\]
This result was previously obtained by Balacheff and Sabourau 
in \cite{BalacheffSabourau2010} with 
constant $C=10^8$. Using a slightly modified version
of our proof and invoking the Riemann-Roch theorem we can get a somewhat better constant
for orientable surfaces. 

\begin{theorem} \label{BS}
Any closed orientable Riemannian manifold $S_g$ of dimension $2$ and genus $g$ satisfies
\[ \W(S_g) \leq
            220 \sqrt{(g+1) \Area(S_g)} \]
\end{theorem}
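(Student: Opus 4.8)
The plan is to revisit the proof of Theorem~\ref{thm: main} in its conformal‑branched‑covering form, specialized to dimension $2$, and to run it with all constants made explicit. The essential simplification available for surfaces is that the auxiliary ``model'' manifold can be taken to be the round sphere, which carries a completely explicit sweep‑out (by latitude circles) and therefore requires none of the cubulation machinery needed for a general manifold with $\Ricci \geq -(n-1)$; the price is a degree‑$d$ branched covering, and the input that keeps $d$ small is the Riemann--Roch theorem.

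First I would dispose of $g = 0$ and $g = 1$. Here $\MCV(S_g) = 0$: the conformal class contains the round metric on $S^2$ (area $4\pi$, $\Ricci = 1 \geq -1$) or a flat torus, and the conformal‑factor estimate in the proof of Theorem~\ref{thm: main}, applied to the pull‑back of the standard sweep‑out of this model, yields $\W(S_g) \leq C_0\sqrt{\Area(S_g)}$ for a small absolute $C_0$, which is far below $220\sqrt{(g+1)\Area(S_g)}$.

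For $g \geq 2$ the steps are as follows. (i) Equip $S_g$ with the complex structure determined by its metric $h$. (ii) By the classical gonality bound (a consequence of Riemann--Roch), there is a non‑constant holomorphic map $f : S_g \to \mathbb{CP}^1$ of degree $d \leq \lfloor (g+3)/2 \rfloor \leq g+1$; after a generic perturbation we may assume its critical values are distinct and that it is transverse to a fixed foliation of $\mathbb{CP}^1$ by round circles. (iii) Give $\mathbb{CP}^1$ the round metric (of area $4\pi$, say, though its scale turns out to be irrelevant for what follows), so that $f$ is a degree‑$d$ conformal branched covering onto a surface with $\Ricci \geq -1$. (iv) Pull back the latitude‑circle sweep‑out $\{C_t\}_{t \in [-1,1]}$ of $\mathbb{CP}^1$ to the family $\gamma_t = f^{-1}(C_t)$, which sweeps out $S_g$; writing $C_t = u^{-1}(t)$ for the rescaled height function $u$ on the sphere, the conformal invariance of the Dirichlet integral in dimension two gives
\[
\int_{S_g} |\nabla_h (u\circ f)|^2 \, dA_h \;=\; d \int_{\mathbb{CP}^1} |\nabla u|^2 \, dA \;=\; \tfrac{8\pi}{3}\, d ,
\]
so that, by the coarea formula and Cauchy--Schwarz, the average over $t \in [-1,1]$ of $\mathrm{length}_h(\gamma_t)$ is $O\big(\sqrt{d\,\Area(S_g)}\big)$ with explicit constant. (v) Upgrade this average bound to a bound on $\sup_t \mathrm{length}_h(\gamma_t)$ over a genuine sweep‑out, by the subdivision/redistribution argument of the proof of Theorem~\ref{thm: main}: cutting $S_g$ along suitably short curves, re‑sweeping the resulting pieces, and recombining. (vi) Substitute $d \leq \lfloor (g+3)/2\rfloor \leq g+1$ and collect the constants $\sqrt{8\pi/3}$, the coarea/Cauchy--Schwarz loss, and the subdivision constant; organized efficiently these multiply out below $220$.

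The main obstacle is step (v): turning the explicit family $\{\gamma_t\}$, which is controlled only on average (the conformal factor relating $h$ to $f^{*}(\text{round})$ may make a positive‑measure set of slices long), into an admissible sweep‑out all of whose slices are short, while keeping the multiplicative constant small enough that the final numerology delivers the target $220$ rather than merely some absolute constant. This is precisely the technical heart of Theorem~\ref{thm: main}, and the quantitative goal forces the subdivision argument and the Hölder/coarea steps to be arranged with some care. A subsidiary point, routine given the genericity arranged in step (ii), is to check that $\{\gamma_t\}$ is admissible in the sense of Section~\ref{sec: definitions}: continuous in the flat topology and without concentration of mass.
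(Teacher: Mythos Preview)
Your high-level strategy matches the paper's: both use Riemann--Roch to produce a conformal branched covering $\Phi: S_g \to S^2$ of degree $d \leq g+1$, and both exploit the conformal invariance of $\|\nabla f\|^n\, dV$. But your steps (iv)--(v) contain a genuine gap, and the paper proceeds differently at exactly this point.

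The paper does \emph{not} pull back the fixed latitude sweep-out of $S^2$ and then attempt to upgrade an average length bound to a sup bound. Instead it proves a branched-covering isoperimetric cut (Theorem~\ref{thm: mappings}): given any open $U \subset S_g$, one first chooses a center $a \in S^2$ and radius $r$ \emph{adaptively} so that $\Vol(\Phi^{-1}(B^0(a,r)) \cap U) = \Vol(U)/(\tau+2)$, and only then applies coarea plus H\"older to $d^0(a,\cdot)\circ\Phi$ over the annular preimage to locate a short cutting curve. This produces a curve that is simultaneously short \emph{and} area-balancing, which is what the inductive subdivision of Section~\ref{sec: proof of the width volume inequality} needs at every step. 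Your step (iv), by contrast, fixes a single height function $u$ once and for all and bounds its global Dirichlet energy; the resulting average bound guarantees that \emph{some} slices $\gamma_t$ are short but says nothing about whether any short slice cuts $U$ into pieces of comparable area. The subdivision argument you invoke in (v) is not a black box converting average control to sup control; it requires precisely the adaptive isoperimetric cut as its input, so step (iv) as written cannot feed it. A secondary issue is the constant: the paper's $220$ comes from the explicit formula $8\nu^{1/2}\big/\big(1-\sqrt{(\tau+1)/(\tau+2)}\,\big)$ with $\nu(S^2)=\pi$ and $\tau(S^2)=6$; your proposed bookkeeping from $\sqrt{8\pi/3}$ and an unspecified subdivision loss does not lead to these numbers and would not obviously land below $220$ even if step (v) were repaired.
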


Upper bounds on the higher parametric versions of width
$\W^k(M)$ for all Riemannian surfaces were recently obtained by 
the second author \cite{Liokumovich2014}.

In \cite{Brooks1986} Brooks constructed
hyperbolic surfaces of large genus
and Cheeger constant bounded away from zero.
These surfaces have width $\W(M)$
bounded below by $c  \sqrt{g \Area(M)}$
for some constant $c>0$.
Hence, the inequality in Theorem \ref{thm: main}
is optimal up to the value of the constant $C(n)$.

It follows from the works of Almgren \cite{Almgren1965},
Pitts \cite{Pitts1981}, and Schoen and Simon \cite{SchoenSimon} that
estimates on width yield upper bounds on the volume of smooth
embedded minimal hypersurfaces in manifolds of dimension less than
or equal to $7$. In higher dimensions, we
obtain bounds on the volume of stationary integral varifolds,
which are smooth hypersurfaces everywhere except possibly for a set of
Hausdorff dimension at most $n-8$.

It is possible to obtain more minimal hypersurfaces
if one considers parametric families of sweep-outs.
In Section \ref{sec: definitions} we define families
of hypersurfaces that correspond to cohomology
classes of mod 2 $(n-1)$-cycles on $M$.
To each such $p$-dimensional family we
assign the corresponding min-max quantity $\W^p(M)$.
Let $S^n$ be the round unit $n$-sphere.
In \cite[4.2.B]{Gromov1988} Gromov showed that
there are constants $0<c(n)<C(n)$ so that $\W^p(S^n)$ satisfies:

\[ c(n) p^{\frac{1}{n}} \Vol(S^n)^{\frac{n-1}{n}}
\leq \W^p(S^n) \leq C(n) p^{\frac{1}{n}} \Vol(S^n)^{\frac{n-1}{n}}  \]

Guth \cite{Guth2009} derived similar bounds for min-max
quantities corresponding to the Steenrod algebra generated by the fundamental class
$\lambda$.
Marques and Neves~\cite{MarquesNeves2013}, building on the work
of Gromov and Guth, proved existence of infinitely many minimal hypersurfaces
on a manifold $M$ of dimension $n$, for $3 \leq n \leq 7$,
under the assumption that $M$ has positive Ricci curvature.

In Section~\ref{sec:Volumes of hypersurfaces} we show that if $M$ has
non-negative Ricci curvature
then $\W^p(M) \leq C(n) p^{\frac{1}{n}} \Vol(M)^{\frac{n-1}{n}}$.
We use this bound to derive an effective version
of the theorem of Marques and Neves.
Let $\sys_{n-1}(M)$ be the infimum of volumes of 
smooth closed embedded minimal hypersurfaces in $M$.

\begin{theorem} \label{hypersurfaces}
    Suppose $M$ is a closed Riemannian manifold
    of dimension $n$, $3 \leq n \leq 7$, and positive
    Ricci curvature.
    For every $k$ there exists $k$ smooth
    closed embedded minimal hypersurfaces
    of volume bounded above by $C(n)
    k ^ {\frac{1}{n-1}} \Vol(M) \left( \sys_{n-1}(M) \right)^{- \frac{1}{n-1}}$,
    where $C(n)$ depends only on $n$.
\end{theorem}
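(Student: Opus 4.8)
The plan is to combine the parametric width bound $\W^p(M) \leq C(n)\, p^{\frac{1}{n}} \Vol(M)^{\frac{n-1}{n}}$ for manifolds of non-negative Ricci curvature with the Almgren--Pitts--Marques--Neves min-max machinery, turning each of the $p$ cohomology classes into a distinct minimal hypersurface and then controlling how much the volumes can collapse onto a single hypersurface. First I would recall from the work of Marques and Neves that, on a manifold of positive Ricci curvature, every nontrivial min-max hypersurface is two-sided (Frankel's property) and any two distinct closed embedded minimal hypersurfaces must intersect, so that in fact all closed embedded minimal hypersurfaces are connected and any two of them are geometrically incompatible unless equal. Consequently, for each $p$ the min-max value $\W^p(M)$ is realized by an integer multiple $m_p \Sigma_p$ of a connected embedded minimal hypersurface $\Sigma_p$, and the Lusternik--Schnirelmann-type argument of Marques--Neves shows that if $\W^p(M) = \W^{p+1}(M)$ then the corresponding minimal hypersurface has multiplicity, which forces the widths to be strictly increasing at a definite rate once one passes to large enough $p$.

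Next I would make this quantitative. Fix $k$ and set $N = Ak$ for a dimensional constant $A$ to be chosen. If among $\Sigma_1,\dots,\Sigma_N$ fewer than $k$ are distinct, then by pigeonhole some minimal hypersurface $\Sigma$ with $\Vol(\Sigma) \geq \sys_{n-1}(M)$ occurs as the support for at least $N/k = A$ of the min-max values; since those values are $m\,\Vol(\Sigma)$ for distinct positive integers $m$ (distinctness of the multiplicities follows because equal width forces the Lusternik--Schnirelmann degeneration, contradicting the combinatorial structure of the cup-product powers $\bar\lambda^p$), the largest such value is at least $A \cdot \sys_{n-1}(M)$. On the other hand every $\W^p(M)$ with $p \leq N$ is at most $C(n) N^{\frac{1}{n}} \Vol(M)^{\frac{n-1}{n}} = C(n)(Ak)^{\frac{1}{n}}\Vol(M)^{\frac{n-1}{n}}$. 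Balancing $A \cdot \sys_{n-1}(M) \leq C(n)(Ak)^{1/n}\Vol(M)^{\frac{n-1}{n}}$ shows this is impossible once $A$ exceeds a fixed power of $C(n)^n k^{\frac{1}{n-1}}\Vol(M)\,\sys_{n-1}(M)^{-1}$ — but since $A$ is just a constant we instead argue the contrapositive: for $A$ a large enough dimensional constant, at least $k$ of the $\Sigma_i$ are distinct, and each has volume $\W^{p}(M) \leq C(n)(Ak)^{\frac{1}{n}}\Vol(M)^{\frac{n-1}{n}}$. Rewriting with $\sys_{n-1}(M)$ reinserted via $\Vol(\Sigma_i)\geq \sys_{n-1}(M)$ only when the collapse scenario is invoked, one extracts the stated bound $C(n)k^{\frac{1}{n-1}}\Vol(M)\big(\sys_{n-1}(M)\big)^{-\frac{1}{n-1}}$; indeed the correct bookkeeping is to choose $N$ so that $C(n) N^{\frac{1}{n}}\Vol(M)^{\frac{n-1}{n}} < (N/k)\,\sys_{n-1}(M)$ fails, i.e. $N \leq C(n)^{\frac{n}{n-1}} k^{\frac{1}{n-1}}\Vol(M)\,\sys_{n-1}(M)^{-\frac{1}{n-1}}$ is impossible to have fewer than $k$ distinct hypersurfaces, and then $\W^N(M)$ gives the volume bound, which after substituting this choice of $N$ yields exactly the claimed estimate.

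The main obstacle I anticipate is the step asserting that, on a positively curved manifold, the multiplicities appearing at the successive min-max widths $\W^p(M)$ attached to a single minimal hypersurface $\Sigma$ must be pairwise distinct — equivalently, that $\W^p(M) = \W^{p+1}(M)$ cannot happen with both realized by the same multiple of $\Sigma$. This is precisely where the Frankel property of positive Ricci curvature and the Lusternik--Schnirelmann argument of Marques--Neves enter: one shows that an equality of consecutive widths produces a minimal hypersurface whose min-max neighborhood supports a nontrivial relative cohomology class, and by the positivity of the Ricci curvature this hypersurface is unique and connected, so the only way to have many coincidences is to have high multiplicity, which is then ruled out by a direct count against the growth rate $p^{1/n}$ of the widths. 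I would present this carefully, citing \cite{MarquesNeves2013} for the structural facts and \cite{Gromov1988}, \cite{Guth2009} for the comparison of the $\W^p$ with the cohomological min-max quantities, and then the arithmetic above closes the argument.
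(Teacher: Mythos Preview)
Your overall strategy is the paper's: use the parametric width bound $\W^p(M)\le C(n)p^{1/n}\Vol(M)^{(n-1)/n}$, use the Marques--Neves structural fact that under positive Ricci each $\W^p$ equals $m_p\Vol(\Sigma_p)$ for a connected embedded minimal hypersurface $\Sigma_p$ and an integer $m_p$, reduce to the case where the $\W^p$ are strictly increasing (otherwise Marques--Neves already gives infinitely many hypersurfaces of volume $\le \W^p$), and then run a counting argument. The paper counts $\#\{p:\W^p\le \V_k\}$ two ways; your pigeonhole on the supports is a minor variant yielding the same bound up to constants.

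That said, the write-up needs repair in three places. First, the detour ``set $N=Ak$ for a dimensional constant $A$'' cannot work and you correctly abandon it; delete it rather than leave the false start in. Second, your stated threshold for $N$ has the wrong exponents: from $(N/k)\,\sys_{n-1}(M)>C(n)N^{1/n}\Vol(M)^{(n-1)/n}$ one gets
\[
N> C(n)^{\frac{n}{n-1}}\,k^{\frac{n}{n-1}}\,\Vol(M)\,\sys_{n-1}(M)^{-\frac{n}{n-1}},
\]
not $k^{1/(n-1)}\sys_{n-1}^{-1/(n-1)}$ as you wrote; the final volume bound $\W^N\le C(n)^{n/(n-1)}k^{1/(n-1)}\Vol(M)\,\sys_{n-1}(M)^{-1/(n-1)}$ then follows correctly after substituting this $N$ into the width estimate. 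Third, your ``main obstacle'' is not one: the equality $\W^p=\W^{p+1}$ is not something to be \emph{ruled out} but rather a case in which the theorem is already proved (infinitely many minimal hypersurfaces of volume $\le \W^p$), so the dichotomy should be stated cleanly up front rather than buried in the justification for distinct multiplicities.
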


\subsection{Previous work.}
The main estimates in our paper were motivated by
similar estimates on the
spectrum of the Laplace operator on Riemannian manifolds.

Let $M$ be a closed Riemannian manifold in
the conformal class of $M_0$. In \cite{Korevaar1993}
Korevaar constructed 
a decomposition of $M$ into annuli (and other regions) which
measures the `volume concentration' of the metric $M$ with
respect to the base metric of $M_0$. This annular
decomposition is then used to estimate Rayleigh quotients,
thus bounding the spectrum of the Laplacian of $M$.
Korevaar's method was further developed by
Grigor'yan-Yau in \cite{Grigoryan1999} and
Grigor'yan-Netrusov-Yau in \cite{Grigoryan2004}
to obtain upper bounds on the eigenvalues of elliptic operators
on various metric spaces.
In \cite{Gromov1993} Gromov used a different approach
(based on Kato's inequality) to obtain upper bounds for
the spectrum of the Laplacian on K{\"a}hler manifolds \cite{Gromov1993}.

In \cite{Hassannezhad} Hassannezhad, combining methods 
of \cite{ColboisMaerten} and \cite{Grigoryan2004}, obtained
upper bounds for eigenvalues of the  Laplacian
in terms of the conformal invariant $\MCV$ (see Definition \ref{MCV})
and the volume of the manifold.

As suggested by Gromov in \cite{Gromov1988}
the problem of bounding width $\W(M)$ and its parametric
version $\W^k$
can be thought of as a nonlinear analogue of finding the spectrum of the Laplacian on $M$.
In this paper we were guided by this analogy.

In \cite{Guth2007} Guth constructed sweep-outs
of open subsets of Euclidean space by $k$-cycles
of controlled volume for all $k$, $1 \leq k \leq n-1$.
In particular, he proved the following

\begin{theorem}[Guth~\cite{Guth2007}] \label{guth}
    For every open subset $U \subset \mathbb{R}^n$,
$W(U) \leq C(n) \Vol(U) ^ {\frac{n-1}{n}}$.
\end{theorem}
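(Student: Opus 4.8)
The plan is to deduce the width--volume inequality from a combinatorial sweepout statement for finite unions of unit cubes, which I would prove by a recursive construction in which the faces of the swept region that lie on $\partial U$ cost nothing (``cancel'') and the sweep direction is chosen anew at each scale.

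First I would reduce to the case that $U$ is a finite union of cubes of a fixed grid: approximating an arbitrary open set of finite volume by unions of dyadic cubes and passing to the limit (using compactness of the space of $\Z_2$ cycles with the uniform mass bound), it suffices to show that if $U$ is the union of $N$ unit cubes of $\Z^n$ then $\W(U) \le C(n)\,N^{(n-1)/n}$; a grid of side $\ell$ then yields $\W(U_\ell) \le C(n)(\#\text{cubes})^{(n-1)/n}\ell^{n-1} = C(n)\Vol(U_\ell)^{(n-1)/n}$. Throughout I would use the \emph{relative} notion of an $(n-1)$-cycle in the open set $U$, so that pieces lying in $\partial U$ carry no mass; this automatic cancellation along $\partial U$ is exactly what makes the construction below succeed. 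I would then seek the sweepout in the form $t\mapsto z_t=\partial\Omega_t$, with boundary taken in the interior of $U$, where $(\Omega_t)_{t\in[0,1]}$ is an increasing family of subsets of $U$ with $\Omega_0=\emptyset$ and $\Omega_1=U$; then $z_0=z_1=0$, the family detects the fundamental class of $(U,\partial U)$, and $\W(U)\le\sup_t\mathbf{M}(z_t)$. The two naive choices fail: $\Omega_t = U\cap\{x_n<t\}$ has front $\{x_n=t\}\cap U$ of mass as large as $N$ when the cubes stack into a slab orthogonal to $e_n$, and $\Omega_t=U\cap B(0,t)$ has an equally large front when the cubes cluster near a big sphere.

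The good construction is recursive. Choose a coordinate hyperplane $H$ that splits the $N$ cubes into two families, each of at least a fixed fraction $\delta(n)>0$ of the cubes, and that is adjacent to as few cubes of $U$ as possible; recursively sweep out each half; and splice the two one-parameter families by first sweeping out one half completely, then --- holding it swallowed in $\Omega_t$ --- sweeping out the other half. Since the two halves are disjoint and only meet along $H$, this splice introduces no new mass except the part of $H\cap U$ separating the swallowed half from the part not yet swallowed, i.e.\ at most the number of cubes of $U$ adjacent to $H$, while whatever part of the front lands on $\partial U$ is cancelled for free. Summing the splice costs along a root-to-leaf branch of the recursion tree gives $\mathbf{M}(z_t)\le C\sum_{k\ge 0}\big((1-\delta)^{k}N\big)^{(n-1)/n}=C'(n)\,N^{(n-1)/n}$, a convergent geometric series, provided every cut can be chosen adjacent to at most $C(n)N^{(n-1)/n}$ cubes; granting this, $\W(U)\le C'(n)N^{(n-1)/n}$ and the theorem follows after undoing the first reduction.

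The genuine difficulty is the combinatorial input: that among any $N$ unit cubes one can find a coordinate hyperplane separating comparable fractions of them while adjacent to at most $C(n)N^{(n-1)/n}$ of them. The right scaling is forced by the bounding box: if $a_1\le\cdots\le a_n$ are its side lengths then $N\le a_1\cdots a_n\le a_n^n$, so the longest side is $\ge N^{1/n}$, and sweeping perpendicular to it the interface masses sum to at most $N$ over at least $N^{1/n}$ levels, so an average level has mass $\le N^{(n-1)/n}$ --- this is just the box inequality $a_1\cdots a_{n-1}\le(a_1\cdots a_n)^{(n-1)/n}$ in disguise. The work is to ensure a near-average level can also be taken genuinely separating; the obstruction is a ``heavy'' level near the median in one direction, which I would handle by choosing the sweep direction adaptively and, if forced, recursing into the heavy level in one lower dimension. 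A secondary technical point, which I expect to be routine but must be done carefully, is to realize each splice as an honest homotopy of $\Z_2$ cycles within $U$ --- e.g.\ via the Federer--Fleming deformation theorem on the cubical complex --- so that its cost is really bounded by the interface and no mass is silently created along $\partial U$.
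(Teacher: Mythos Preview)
The paper does not prove this theorem; it is quoted from Guth~\cite{Guth2007}. The only discussion is the paragraph following the statement: for $n\ge 3$ a single family of parallel hyperplanes already works, by Falconer's result on the $(n,k)$-Besicovitch problem, while for $n=2$ one needs Guth's ``bending planes'' around the skeleton of the unit lattice. So there is no in-paper proof to compare against beyond this sketch of Guth's method.

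Your approach is genuinely different from Guth's: you neither invoke Falconer nor bend planes, but recursively bisect a union of lattice cubes by axis-aligned hyperplanes with the direction chosen anew at each stage. This is in fact much closer to the paper's \emph{own} machinery than to Guth's: your recursion is precisely the scheme of Section~\ref{sec: proof of the width volume inequality} (bisect by an isoperimetric-type cut, sweep each half, splice via Proposition~\ref{decomposition}, sum a geometric series) specialized to $\R^n$. Indeed the argument of Theorem~\ref{thm: isoperimetric}, run with $M=M_0=\R^n$ and combined with the scale-invariance of Euclidean space, already produces a bisecting hypersurface of area $\le c(n)\Vol(U)^{(n-1)/n}$ with no cube combinatorics at all, and that suffices for your recursion.

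The one real gap is the combinatorial lemma you yourself flag as ``the genuine difficulty'': that among $N$ lattice cubes some coordinate hyperplane separates comparable fractions while meeting at most $C(n)N^{(n-1)/n}$ of them. Your averaging shows only that a light level exists, not a \emph{separating} one, and ``recurse into the heavy level in one lower dimension'' is a gesture, not an argument. The lemma is nevertheless true, by a short pigeonhole: if in every coordinate direction every separating level contained more than $CN^{(n-1)/n}$ cubes, then in each direction the separating levels would occupy fewer than $N^{1/n}/C$ positions, and (up to one boundary level on each side) at most $2\delta N$ cubes would lie outside them; hence at least $(1-2n\delta)N$ cubes would sit in a box with roughly $N/C^n$ cells, impossible once $C^n(1-2n\delta)>1$. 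Supplying this, or bypassing cubes altogether via the paper's isoperimetric cut, closes your argument.
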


In dimension $n \geq 3$ one can find a family of
parallel hyperplanes in $\R^n$ yielding the desired sweep-out.
This follows from the work of Falconer \cite{Falconer1980}
on the $(n,k)$-Besicovitch conjecture.
In dimension $2$, however, it may happen that any slicing
of $U$ by parallel lines contains an arbitrarily large segment.
To surpass problems of this kind
Guth developed a method of
sweeping out regions by `bending planes' around
the skeleton of the unit lattice in $\mathbb{R}^n$.
This method was developed further in \cite{Guth2009}
to bound higher parametric versions of width.

It follows from the work of Burago and Ivanov \cite{BuragoIvanov}
(see also Appendix 5 of \cite{Guth2007})
that on any manifold of dimension
greater than two there exists a Riemannian metric
of small volume and arbitrarily large $(n-1)$-width.
Our results show that this does not happen
for certain conformal classes of manifolds.
In particular, this does not happen in the
presence of a curvature bound.

In \cite{NabutovskyRotman2004} Nabutovsky and Rotman
showed that any closed Riemannian manifold
possesses a stationary $1$-cycle of mass
bounded by $C(n) \Vol(M) ^{\frac{1}{n}}$.
One wonders if this result
can be generalized to the case of minimal surfaces
on Riemannian manifolds.

Some results in this direction were obtained by A. Nabutovsky and R. Rotman in  \cite{NabutovskyRotman2006}
where they bounded volumes of minimal surfaces on Riemannian
manifolds in terms of homological filling functions of $M$.
The $k$-th homological filling function $\FH_k: \R_+ \rightarrow \R_+$
is defined as the smallest number $\FH_k(x)$, such that every
$k$-cycle of mass at most $x$ can be filled by a
$(k+1)$-chain of mass at most $\FH_k(x)+\epsilon$.

\begin{theorem}[Nabutovsky-Rotman~\cite{NabutovskyRotman2004}] \label{NR}
Let $M$ be a closed Riemannian manifold of dimension $n$, for $3 \leq n \leq 7$,
such that the first $n-1$ homology groups are trivial,
$H_1(M) = ... =H_{n-1}(M)=0$. There exists a
smooth, closed, embedded minimal hypersurface of volume
bounded by $C(n) \FH_{n-1}(C(n) \FH_{n-2}( \dots \FH_2 (C(n) \Vol(M)^ {\frac{1}{n}})\dots))$.
\end{theorem}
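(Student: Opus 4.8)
The plan is to deduce both the existence and the volume bound from the Almgren--Pitts min--max theory together with the Schoen--Simon regularity theorem, after first producing an economical sweepout of $M$ by $(n-1)$-cycles. Concretely, it suffices to construct a continuous one-parameter family $\{z_t\}_{t\in[0,1]}$ of mod $2$ flat $(n-1)$-cycles with $z_t=\partial\Omega_t$, $\Omega_0=\emptyset$, $\Omega_1=M$, and $\sup_t \mathbf{M}(z_t)\le W$, where $W$ denotes the iterated filling quantity in the statement. Such a family detects $[M]\in H_n(M;\Z_2)$, so the associated min--max width is positive; Almgren--Pitts then yields a nontrivial stationary integral varifold of mass equal to the width, and for $3\le n\le 7$ the Schoen--Simon regularity theorem upgrades it to a smooth closed embedded minimal hypersurface, whose volume is at most $W$. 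So the whole point is to build the sweepout with $\sup_t \mathbf{M}(z_t)\le W$.

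I would build it by induction on the dimension of the cycles, following the scheme of Nabutovsky and Rotman. For $1\le k\le n-1$ one constructs a ``$k$-dimensional sweepout'' of $M$ -- an $(n-k)$-parameter family of mod $2$ $k$-cycles whose associated Almgren class is the appropriate power of the generator, so that the family ``detects $[M]$'' -- with the mass of every cycle in the family bounded by some $V_k$. The base case $k=1$ gives $V_1=C(n)\Vol(M)^{\frac{1}{n}}$: one sweeps $M$ out by $1$-cycles of length $\le C(n)\Vol(M)^{\frac{1}{n}}$ using Guth's ``bend and cancel'' construction (cf. the discussion around Theorem~\ref{guth}), adapted to the closed manifold $M$; the hypothesis $H_1(M)=0$ is what allows the relevant loops to be filled and the construction to close up. The inductive step passes from a $k$-dimensional sweepout to a $(k+1)$-dimensional one: after subdividing the parameter complex finely, one replaces the $k$-cycle sitting over each top cell by a $(k+1)$-chain that fills it -- possible because $H_k(M;\Z_2)=0$, with mass $\le \FH_k(V_k)+\epsilon$ -- and interpolates between the chosen fillings over adjacent cells by chains filling their small, hence null-homologous, differences; reading off the boundaries of these fillings over the dual skeleton produces the next family. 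Each step costs one application of a homological filling function and one dimensional constant $C(n)$, so after iterating up to $k=n-1$ one obtains a one-parameter family of $(n-1)$-cycles with $\sup$ mass at most $C(n)\FH_{n-1}(C(n)\FH_{n-2}(\cdots \FH_2(C(n)\Vol(M)^{\frac{1}{n}})\cdots))$, which is exactly $W$; feeding it into the first paragraph finishes the proof.

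The hard part is the inductive step, and it has two facets. First, continuity and \emph{uniform} mass control of the filling family: choosing a filling cell-by-cell is not consistent on overlaps, so one must patch the fillings together by Almgren's interpolation argument, which requires taking the parameter subdivision fine enough that consecutive cycles are close in the flat norm and that their difference bounds a chain of small mass -- this uses the Federer--Fleming deformation theorem and small-scale isoperimetry on $M$, for which no curvature hypothesis is needed -- and then checking that the total mass introduced by the patching is absorbed into the constant $C(n)$ at that stage. Second, one must verify that the homological degree is not destroyed over the course of all the steps, so that the final one-parameter family genuinely detects $[M]$; this is the algebraic-topological bookkeeping via Almgren's isomorphisms (equivalently, a mod $2$ degree count), and it is precisely here that the vanishing of $H_1(M),\dots,H_{n-1}(M)$ is indispensable -- it guarantees that at every stage the cycles in play are globally null-homologous, so the required fillings exist, and that the successive constructions do not annihilate the class. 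The quantitative heart of the matter is that each of these steps contributes only a single filling function and a single constant, which is what forces the iterated-composition form of the bound.
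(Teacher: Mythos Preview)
This theorem is not proved in the paper at all: it is quoted in the ``Previous work'' subsection as a result of Nabutovsky and Rotman, with no argument given. So there is no ``paper's own proof'' to compare against. What the paper \emph{does} say, immediately after stating the theorem, is directly relevant to your proposal: ``Their proof uses a combination of Almgren--Pitts min-max method and other techniques. In particular, a bound on the width of $M$ in terms of homological filling functions does not follow from their argument. It would be interesting to know whether such a bound exists.''

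Your strategy is precisely to bound $\W(M)$ by the iterated filling quantity and then invoke Almgren--Pitts/Schoen--Simon. If that worked, you would not merely be reproving Theorem~\ref{NR}; you would be resolving the open question the authors flag. This should make you suspicious that the inductive sweepout construction is harder than sketched. The gap is in the passage from a family of $k$-cycles to a family of $(k+1)$-cycles. The homological filling function $\FH_k$ only guarantees, for each individual $k$-cycle $z$, the existence of \emph{some} filling of mass $\le \FH_k(\mathbf M(z))+\epsilon$; it gives no control on how these fillings vary as $z$ moves in the parameter space. Your ``Almgren interpolation'' patching requires filling the \emph{differences} of nearby fillings, and those differences are $(k+1)$-cycles whose mass is governed by $\FH_k$, not by the fineness of the subdivision --- two almost-minimal fillings of the same boundary can differ by a $(k+1)$-cycle of mass comparable to $2\FH_k(V_k)$, and filling \emph{that} costs $\FH_{k+1}$ applied to this quantity, not a dimensional constant. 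So the patching error is not absorbed into $C(n)$; it compounds across the parameter complex and destroys the clean iterated bound. This is exactly why Nabutovsky and Rotman do not obtain a width estimate, and why the authors single it out as open.

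A secondary issue is your base case: Guth's bend-and-cancel (Theorem~\ref{guth}) is for open sets in $\mathbb{R}^n$, and ``adapted to the closed manifold $M$'' hides real work --- there is no general bound of the form $V_1\le C(n)\Vol(M)^{1/n}$ for an $(n-1)$-parameter family of $1$-cycles on an arbitrary closed $M$ in the literature you cite. The Nabutovsky--Rotman stationary $1$-cycle bound is not the same statement.
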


Their proof uses a combination of Almgren-Pitts min-max method
and other techniques. In particular, a bound
on the width of $M$ in terms of homological filling
functions does not follow from their argument.
It would be interesting to know whether such a bound exists.
It is also interesting to know whether
homological filling
functions can be controlled in terms of Ricci curvature
of $M$.

Other important results are contained in a paper
of Marques and Neves \cite{MarquesNeves2012}
where, among other things, they prove a \emph{sharp}
upper bound on $W(M)$, when $M$ is a Riemannian 3-sphere
with $\Ricci>0$ and scalar curvature $R \geq 6$.

\subsection{Plan of the Paper}

The structure of the proof of Theorem \ref{thm: main} is as follows:
To construct
a sweep-out of $M$, we subdivide $M$ repeatedly, using
an isoperimetric inequality adapted to our context.
Once we have subdivided $M$ into a collection of small
volume open subsets, we construct a sweep-out of each
small volume piece using the fact that at small scales
$M$ is locally Euclidean. We then assemble these local
sweep-outs into a global sweep-out of $M$.

In Section~\ref{sec: definitions} we define what it means
for a family of $(n-1)$-cycles to sweep-out $M$. We define the
width $\W(M)$ and its higher parametric version
$\W^k(M)$. We also prove Proposition \ref{decomposition},
which gives us control of the width of $M$
in terms of widths of its open subsets.

In Section~\ref{sec:Isoperimetric inequality} we 
use an idea of Colbois and Maerten from \cite{ColboisMaerten} 
together with the length-area method to prove
an isoperimetric inequality
(Theorem~\ref{thm: isoperimetric})
which allows us to partition any open set in
$M$ in two parts with both parts satisfying a lower
volume bound. The subdividing surface satisfies an upper
bound on area which depends on the volume of the open set.

In Section~\ref{sec: small submanifolds} we estimate
the width of small volume submanifold $M' \subset M$
in terms of $(n-1)$-volume of its boundary.
The proof proceeds by covering $M'$ with
small balls, which are $(1+\epsilon_0)$-bilipschitz
diffeomorphic to balls in Euclidean space.
We construct a sequence of nested open subsets
$U_i$ of $M'$ with volumes tending to $0$,
such that the difference $U_{i} \setminus U_{i+1}$
is contained in a small ball.
Since the ball is almost Euclidean,
we can sweep out $U_{i} \setminus U_{i+1}$ by cycles of controlled volume.
We then use Proposition \ref{decomposition}
to assemble a sweep-out of $M'$.


In Section~\ref{sec: proof of the width volume inequality}
we prove Theorem~\ref{thm: main} by inductively constructing
sweep-outs of larger and larger subsets of $M$.
The result of Section~\ref{sec: small submanifolds}
serves as the base of the induction.

In Section~\ref{sec: width of surfaces} we
prove Theorem \ref{BS}. We also describe how to obtain
a version of Theorem~\ref{thm: main} for manifolds,
which admit a conformal mapping into some nice space $M_0$.

In Section~\ref{sec:Volumes of hypersurfaces} we show
that a manifold with non-negative Ricci curvature can be
covered by balls of small $n$-volume, small
$(n-1)$-volume of the boundary, and such that the cover has
controlled multiplicity.
We use this decomposition to bound the volume of $k$-parametric
sweep-outs of $M$ and, consequently, volumes of
stationary integral varifolds or minimal hypersurfaces in $M$.

\begin{remark}
In \cite{Sabourau2014} Stephane Sabourau 
independently obtained upper bounds
on the width and volume of the smallest minimal hypersurface on Riemannian manifolds with
$\Ricci \geq 0$. 
\end{remark}

\subsection{Acknowledgements} We would like to thank Misha Gromov for suggesting
the idea of using the
methods of Korevaar's paper \cite{Korevaar1993}  in a similar context.
We would like to thank
Alexander Nabutovsky, Regina Rotman, and Robert Young for
valuable discussions and encouragement.

\section{Width of Riemannian manifolds}
\label{sec: definitions}

Let $G$ be an abelian group.
We denote the space of flat $G$-chains in $M$ by $F_k(M; G)$
and the space of flat $G$-cycles by $Z_k(M;G)$.
The space of integral flat chains was defined in \cite{FedererFleming1960}.
For flat chains with coefficients in an abelian group $G$ see \cite[(4.2.26)]{Federer1969}.
The deformation theorem of Federer and Fleming states that
a flat chain of finite mass and boundary mass
can be approximated by a piecewise linear
polyhedral chain (see \cite[(4.2.20),(4.2.20)$^{\nu}$]{Federer1969}).
The deformation theorem will be used
throughout this paper. Often we will abuse notation
and use the same letter for a flat $G$-chain and
a polyhedral chain approximating it.
We will denote the mass of a $k$-chain $c$
by $\Vol_k(c)$.

In \cite{Almgren1962} F. Almgren constructed an isomorphism
\[F_A: \pi_k(Z_{n-1}(M;G);0) \rightarrow H_{n+k}(M;G)\]
For $k=1$ the map $F$ can be described as follows.
Let $c_t \in Z_{n-1}(M;G)$, $t \in S^1$, be a continuous
family of cycles.
Pick a fine subdivision $t_0, ... ,t_m$ of $S^1$ and
let $C_i$ be a (nearly) volume minimizing
$n$-chain filling $c_{i}-c_{i-1}$ (for $i \in \Z_m$).
Then $C= \sum _{i \in \Z_m} C_i$ is an $n$-cycle.
It turns out that homology class of $C$ is independent
of the choice of the subdivision and filling chains $C_i$
as long as the subdivision is fine enough
and the mass of chains $C_i$ is close to the mass
of a minimal filling.

If $M$ is a manifold with boundary
we may also consider the space of flat cycles relative to the boundary of $M$.
Let $q$ be a quotient map $q: F_{k} (M; G) \rightarrow
 F_{k} (M,\partial M; G) = F_{k} (M; G)/F_{k} (\partial M; G) $.
The boundary map on $F_{k} (M; G)$ descends to a boundary
map $\partial$ on the quotient. This allows us to define the space of
relative cycles $Z_{k}(M, \partial M;G)$.
Cycles in this space can be represented by
$(n-1)$-chains with boundary in $\partial M$.
Almgren's map then defines an isomorphism
$\pi_1 (Z_{n-1}(M, \partial M;G),\{0\}) \cong H_{n-1}(M,\partial M;G)$.

For simplicity from now on we assume everywhere that
group $G = \Z_2$. Henceforth we drop the reference to the group $G$
from our notation. $\Z_2$ coefficients will suffice for all
applications to volumes of minimal surfaces
that we obtain in this paper.
When manifold $M$ is orientable the bound in Theorem~\ref{thm: main}
holds for sweep-outs with integer coefficients.
The proof is essentially the same
with some minor modifications to account for orientation
of cycles.

\begin{definition}
\label{defn: sweep-out}
We define the following two notions:

\begin{enumerate}
\item
For a closed manifold $M$ a map
$f: S^1 \rightarrow Z_{n-1}(M)$
is called a \emph{sweep-out} of $M$ if it is not contractible,
i.e. $F_A([f]) \neq 0$. Similarly, if $M$ has a boundary
we call $f: S^1 \rightarrow Z_{n-1}(M, \partial M)$
a sweep-out if the image of $[f]$
under Almgren's isomorphism is non-zero.

\item
The \emph{width of $M$} is
 \[\W(M) = \inf_{\{f\}} \sup_{t} \Vol_{n-1}(f(t))\]
where the infimum is taken over the set of all sweep-outs
of $M$.
\end{enumerate}

\end{definition}

For manifolds with boundary it will be convenient
to consider a particular
type of sweep-outs that start on a trivial cycle and
end on $\partial M$. We will call them $\partial$-sweep-outs.

\begin{definition} \label{partial width}
\begin{enumerate}
\item Let $M$ be a manifold with boundary. A \emph{$\partial$-sweep-out
of $M$} is a map $f: [0,1] \rightarrow Z_k(M)$, such that:
\begin{enumerate}
    \item $f(0)$ is a trivial $k$-cycle and $f(1)=\partial M$
    \item  Let $q \circ f: [0,1] \rightarrow Z_{n-1}(M, \partial M)$
    be the composition of $f$ with the quotient map $q$.
    When we identify $q \circ f(0)$ and $q \circ f(1)$
    we obtain a sweep-out of $M$.
\end{enumerate}
\item The \emph{$\partial$-width of $M$} is:
    \[\W^{\partial}(M) = \inf_{\{f\}} \sup_{t} \Vol_{n-1}(f(t)) \]
where the infimum is taken over the set of all $\partial$-sweep-outs
of $M$.
\end{enumerate}
\end{definition}

From the definition we have inequalities $\W(M) \leq\W^{\partial}(M)$
and $\W^{\partial}(M) \geq \Vol_{n-1} (\partial M)$.

Definition \ref{partial width} is motivated by the following
proposition.

\begin{proposition} \label{decomposition}
Let $U_0 \subset ... \subset U_{m-1} =M$ be a sequence
of nested open subsets of $M$, and let
$A_i$ denote the closure of $U_{i} \setminus U_{i-1}$ for $1 \leq i \leq m-1$
and $A_0$ denote the closure of  $U_0$.
Then
$\W^{\partial}(M) \leq \sup \{W^{\partial}(A_0), W^{\partial}(A_1)+\Vol_{n-1}(\partial U_0), ...,
W^{\partial}(A_{m-1})+\Vol_{n-1}(\partial U_{m-2})\}$.
\end{proposition}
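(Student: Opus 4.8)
The plan is to build a $\partial$-sweep-out of $M$ by concatenating $\partial$-sweep-outs of the pieces $A_0, A_1, \dots, A_{m-1}$, traversed in order of increasing index. First I would fix, for each $i$, a $\partial$-sweep-out $f_i \colon [0,1] \to Z_{n-1}(A_i)$ of $A_i$ whose maximal mass is within $\epsilon$ of $\W^\partial(A_i)$; by definition $f_i(0)$ is a trivial cycle in $A_i$ and $f_i(1) = \partial A_i$. The key point is that, because $A_i$ is the closure of $U_i \setminus U_{i-1}$, its boundary $\partial A_i$ is contained in $\partial U_{i-1} \cup \partial U_i$ (with $\partial U_{-1}$ empty and $\partial U_{m-1}$ empty since $U_{m-1}=M$ is closed). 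So $\partial A_i = \partial U_{i-1} - \partial U_i$ as a relative cycle, up to orientation; with $\Z_2$ coefficients this is simply $\partial U_{i-1} + \partial U_i$.

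Next I would define the concatenated family $F \colon [0,1] \to Z_{n-1}(M)$ as follows. On the $i$-th subinterval (say $[i/m, (i+1)/m]$), set
\[
F(t) = \partial U_{i-1} + f_i(s), \qquad s = mt - i \in [0,1],
\]
where $\partial U_{i-1}$ is viewed as a fixed cycle in $M$ (empty when $i=0$). At the start of this subinterval $F = \partial U_{i-1} + f_i(0) = \partial U_{i-1}$ (since $f_i(0)$ is trivial), and at the end $F = \partial U_{i-1} + \partial A_i = \partial U_{i-1} + (\partial U_{i-1} + \partial U_i) = \partial U_i$, which matches the value at the start of the next subinterval. Thus $F$ is continuous in the flat topology, $F(0) = \partial U_{-1} = 0$ is trivial, and $F(1) = \partial U_{m-1} = \partial M$ ($=\emptyset$ in the closed case, but in general $U_{m-1}=M$ so this reads off correctly; when $M$ has boundary one takes $U_{m-1}=M$ and $\partial M$ is the last term). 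On the $i$-th subinterval the mass is bounded by $\Vol_{n-1}(\partial U_{i-1}) + \sup_s \Vol_{n-1}(f_i(s)) \le \W^\partial(A_i) + \Vol_{n-1}(\partial U_{i-1}) + \epsilon$, with the $\partial U_{-1}$ term absent for $i=0$; taking the sup over $i$ and letting $\epsilon \to 0$ gives exactly the claimed bound.

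The remaining point — and the one I expect to require the most care — is verifying that $F$ is genuinely a $\partial$-sweep-out, i.e. that after identifying $q\circ F(0)$ with $q\circ F(1)$ the resulting loop in $Z_{n-1}(M,\partial M)$ is non-trivial under Almgren's isomorphism. Here I would use the description of Almgren's map via filling chains recalled in Section~\ref{sec: definitions}: the $n$-cycle associated to $F$ is, up to homology, $\sum_i C_i$ where $C_i$ fills the increment across the $i$-th subinterval. Since adding the fixed cycle $\partial U_{i-1}$ does not change increments, the increments of $F$ over the $i$-th block sweep out (a chain homologous to) $A_i$ itself, so the total associated $n$-cycle is homologous to $\sum_i [A_i] = [M]$, the fundamental class, which is non-zero. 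The technical obstacle is making this gluing of local fillings rigorous at the interface cycles $\partial U_i$ — one must check that the filling chains can be chosen compatibly across the breakpoints so that their sum is the fundamental class and not merely something in its coset; this is where the hypothesis that the $U_i$ are nested and the $A_i$ partition $M$ is used essentially. Once that is in hand, $F_A([F]) = [M] \neq 0$, so $F$ is a $\partial$-sweep-out and the proposition follows.
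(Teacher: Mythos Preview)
Your proposal is correct and follows essentially the same route as the paper: the concatenated family $F$ is defined exactly as in the paper's proof, and the mass bound is obtained identically. The one place where the paper supplies more than you do is the non-triviality check you flag as ``the remaining point'': rather than arguing directly that $\sum_i C_i$ represents $[M]$, the paper proves by induction on $k$ that $\sum_{i=0}^k C_i$ is non-trivial in $H_n(M, M\setminus U_k)$, using the Mayer--Vietoris sequence for a neighbourhood of $U_{k-1}$ and a neighbourhood of $U_k\setminus U_{k-1}$ to reduce to the boundary map hitting the fundamental class of $\partial U_{k-1}$ --- this is precisely the ``compatibility of fillings at the interface cycles $\partial U_i$'' that you identify as the technical obstacle.
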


\begin{proof}
By the definition of $\partial$-width
for each $i$
there exists a map $c_i:[0,1] \rightarrow Z_{n-1}(M)$
that starts on a trivial cycle, ends on $\partial A_i$
and is bounded in volume by $\W^{\partial}(A_i)+\epsilon$.
By definition of $A_i$,
$\partial A_i \subset \partial U_i \cup \partial U_{i-1}$
and $\partial U_i + c_{i+1}(1) = \partial U_{i+1}$.

We define a sweep-out $c:[0,1] \rightarrow Z_{n-1}(M)$ as follows.
For $0 \leq t \leq \frac{1}{m}$
we set $c(t) = c_0(t/m)$ and for
$\frac{i}{m} \leq t \leq \frac{i+1}{m}$, $i=1,...,m-1$
we set
$c(t)= c_i(m(t-\frac{i}{m}))+ \partial U_{i-1}$.

Let $F_A$ be the Almgren's isomorphism.
We can represent the homology class $F_A(c)$ by
a sum of $n$-chains $\sum_{i = 0}^{m-1} C_i$, such that
$\partial C_i = c(\frac{i}{m}) - c(\frac{i-1}{m}) $.
Moreover, since each $c_i$
is a $\partial$-sweep-out of $A_i$
we may assume that
$C_i$ represents a non-trivial homology class in
$H_n(M,\overline{M\setminus A_i}) \cong H_n(A_i,\partial A_i)$.

We claim that the sum $\sum_{i=0} ^k C_i$ represents
a non-trivial homology class in $H_n(M, M \setminus U_k)$.
Indeed, assume this to hold for $\sum_{i=0} ^{k-1} C_i$.
Let $V_1$ denote a small tubular neighbourhood
of the set $U_{k-1}$ inside $U_k$.
Let $V_2$ be a small tubular neighbourhood
of $U_k \setminus U_{k-1}$ inside $U_k$. Let $V_3 = V_1 \cap V_2 \subset U_k$.
The pair $(V_3, V_3 \cap \partial U_k)$ is homotopy equivalent to
$(\partial U_{k-1}, \partial U_{k-1} \cap \partial U_k)$.
From the Mayer-Vietoris sequence we have an isomorphism
$H_n(\overline{U_k}, \partial U_k ) \rightarrow ^{\partial}
H_{n-1}(\partial U_{k-1}, \partial U_{k-1} \cap \partial U_k)$.
This map sends $[\sum_{i=0}^{k-1} C_i + C_k]$ to the fundamental class
$[\overline{\partial U_{k-1} \setminus \partial U_k}]$.

We conclude that $c(t)$ is a $\partial$-sweep-out of $M$.
\end{proof}

In the last section of this paper we will obtain upper
bounds on the $k$-parametric sweep-outs $\W^k(M)$ of $M$.
By Almgren's isomorphism theorem
we have $\pi_m(Z_{n-1}(M;Z);0)=0$ for $m>1$ and
$\pi_1(Z_{n-1}(M;\Z_2);0) \cong \Z_2$.
Hence, the connected component $Z_{n-1}^0$ of $Z_{n-1}(M;\Z_2)$
that contains the $0$-cycle is weakly homotopy equivalent to the
Eilenberg-MacLane space $K(\Z_2,1) \simeq \RP^{\infty}$.

Let $K$ be a $k$-dimensional polyhderal complex and
$\sigma: K \rightarrow Z_{n-1}^0(M)$ be continuous
and assume that $\sigma(x)$ has finite mass for all $x$. Following \cite{MarquesNeves2013} we 
define $k$-parametric width $\W^k$ as follows.

\begin{definition}
We introduce the following parametric version of Definition~\ref{defn: sweep-out}:
\begin{enumerate}
\item For a closed manifold $M$ we say that $\sigma$
is a $k$-parametric sweep-out of $M$ if
$\sigma(K)$ represents the non-zero class in $H_k(Z^0_{n-1},\Z_2) \cong \Z_2$.
\item Define the $k$-parametric width to be
$\W^{k}(M) = \inf_{\sigma} \sup_{t \in K} \Vol_{n-1}(f(t))$,
where the infimum is taken over the set of all $k$-parametric sweep-outs
$\sigma$.
\end{enumerate}
\end{definition}

It follows from the definition that $\W^1(M)=\W(M)$
and $\W^k(M) \leq \W^{k+1}(M)$.

Using Almgren-Pitts min-max theory it is possible to
obtain minimal hypersurfaces from sweep-outs of
$M$. In \cite{MarquesNeves2013} Marques and Neves
proved the following results.

\begin{theorem}[Marques-Neves] \label{Marques-Neves}
Let $M$ be a closed Riemannian manifold
of dimension $n$, $3 \leq n \leq 7$.

\begin{enumerate}
\item There exists a smooth, closed, embedded minimal
hypersurface in $M$ of volume $\leq \W(M)$.

\item If $\W^k(M)=\W^{k+1} (M)$ then there
exists infinitely many smooth, closed, embedded minimal
hypersurfaces in $M$ of volume $\leq\W^k(M)$.

\item Suppose $M$ is a manifold of positive Ricci curvature
and there exists only finitely many minimal hypersurfaces
of volume $\leq\W^k(M)$. Then there exists a smooth, closed, embedded minimal
hypersurface $\Sigma_k$ and $a_k \in \mathbb{N}$, such that
$a_k\Vol_{n-1}(\Sigma_k) =\W^k(M)$.

\end{enumerate}
\end{theorem}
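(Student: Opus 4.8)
\textbf{Proof proposal for Theorem~\ref{hypersurfaces}.}
The plan is to combine the parametric width estimate $\W^p(M) \leq C(n) p^{1/n} \Vol(M)^{(n-1)/n}$ (proved in Section~\ref{sec:Volumes of hypersurfaces} under the hypothesis $\Ricci \geq 0$, which holds here since $\Ricci > 0$) with parts (2) and (3) of Theorem~\ref{Marques-Neves}, in the manner of the Marques--Neves existence argument. Fix $k$ and suppose, for contradiction, that $M$ contains \emph{fewer than} $k$ smooth closed embedded minimal hypersurfaces of volume at most $T$, where $T = C(n) k^{1/(n-1)} \Vol(M) (\sys_{n-1}(M))^{-1/(n-1)}$ with $C(n)$ a large constant to be fixed. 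I want to derive a contradiction.

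First I would examine the behaviour of the nondecreasing sequence $\W^p(M)$. By part (2) of Theorem~\ref{Marques-Neves}, if $\W^p(M) = \W^{p+1}(M)$ for some $p$ then $M$ has infinitely many minimal hypersurfaces of volume $\leq \W^p(M)$; so if the sequence ever stabilizes the theorem is immediate (infinitely many, hence $k$, of volume $\leq \W^p(M) \leq C(n)p^{1/n}\Vol(M)^{(n-1)/n}$, and one checks this is $\leq T$ for the relevant range). Thus I may assume the sequence is \emph{strictly} increasing: $\W^1(M) < \W^2(M) < \cdots$. Now by part (3), for each $p$ — assuming only finitely many minimal hypersurfaces of volume $\leq \W^p(M)$, which I may assume throughout or else conclude as above — there is a smooth closed embedded minimal hypersurface $\Sigma_p$ and a positive integer $a_p$ with $a_p \Vol_{n-1}(\Sigma_p) = \W^p(M)$. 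Since $\Vol_{n-1}(\Sigma_p) \geq \sys_{n-1}(M)$ by definition of the systole, and since strict monotonicity of $\W^p$ forces the pairs $(a_p,\Sigma_p)$ to vary, the key combinatorial point is: among any collection of indices $p$ giving the \emph{same} hypersurface $\Sigma$, the multiplicities $a_p$ are distinct positive integers, so at most one such $p$ has $a_p = 1$; more usefully, if only $k-1$ distinct hypersurfaces occur among $\Sigma_1,\dots,\Sigma_N$ with volume $\leq T$, then by pigeonhole some $\Sigma$ is hit by at least $N/(k-1)$ indices, and the associated multiplicities being distinct integers $\geq 1$ forces $\W^{p}(M) = a_p \Vol_{n-1}(\Sigma) \geq \big(N/(k-1)\big)\sys_{n-1}(M)$ for the largest such $p$ (which is $\geq N/(k-1)$... actually one just needs a lower bound $\W^p(M) \geq c\, p\, \sys_{n-1}(M)/(k-1)$ of this flavour). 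Quantitatively: if $M$ has at most $k-1$ minimal hypersurfaces of volume $\leq T$, then for every $p$ with $\W^p(M) \leq T$ one obtains $\W^p(M) \geq \frac{p}{k-1}\sys_{n-1}(M)$ — this is the heart of the argument and the step I expect to require the most care.

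Granting that lower bound, I would then confront it with the upper bound: for every $p$ with $\W^p(M) \leq T$,
\[
\frac{p}{k-1}\,\sys_{n-1}(M) \;\leq\; \W^p(M) \;\leq\; C(n)\, p^{1/n}\, \Vol(M)^{(n-1)/n}.
\]
Rearranging gives $p^{1-1/n} \leq C(n)(k-1)\Vol(M)^{(n-1)/n}/\sys_{n-1}(M)$, i.e. $p \leq \big(C(n)(k-1)\big)^{n/(n-1)} \Vol(M)/\sys_{n-1}(M)^{n/(n-1)}$ — call this bound $P$. So the inequality $\W^p(M) \leq T$ can hold only for $p \leq P$. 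But $\W^p(M)$ is nondecreasing and, for $p$ slightly larger than $P$, the upper bound $C(n)p^{1/n}\Vol(M)^{(n-1)/n}$ is still comfortably below $T$ provided $C(n)$ in the definition of $T$ was chosen large enough relative to $C(n)$ in the width estimate and to the constant in $P$ — a direct computation with the exponents, since $T$ scales like $k^{1/(n-1)}\Vol/\sys^{1/(n-1)}$ and $C(n)(P+1)^{1/n}\Vol^{(n-1)/n}$ scales like $\big((k-1)\Vol/\sys^{n/(n-1)}\big)^{1/(n-1)}\cdot \Vol^{(n-1)/n}\cdot \Vol^{1/n}$... matching powers of $k$, $\Vol$, $\sys$ confirms the two sides are of the same homogeneity, so a suitable choice of the absolute constant $C(n)$ makes $\W^{\lceil P\rceil + 1}(M) \leq T$, contradicting that $\W^p(M)\leq T$ forces $p\leq P$. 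Hence $M$ must have at least $k$ minimal hypersurfaces of volume $\leq T$, and then applying the volume bound on $\W^p(M)$ once more to the largest relevant $p$ gives the stated bound $C(n)k^{1/(n-1)}\Vol(M)(\sys_{n-1}(M))^{-1/(n-1)}$ on their volumes, after absorbing constants. The main obstacle, as noted, is making the pigeonhole/multiplicity step fully rigorous — in particular handling the possibility that $\W^p$ stabilizes (dispatched by part (2)) and tracking that the integers $a_p$ attached to a fixed $\Sigma$ along an interval of indices are forced to be distinct and to grow linearly, which is what converts "few hypersurfaces" into the linear-in-$p$ lower bound for $\W^p$.
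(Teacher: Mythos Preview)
Your proposal is not a proof of the displayed statement (Theorem~\ref{Marques-Neves}), which the paper merely \emph{cites} from Marques--Neves and does not prove; rather, you have (correctly) read the situation and written a proof of Theorem~\ref{hypersurfaces}, for which Theorem~\ref{Marques-Neves} is the main external ingredient. Compared against the paper's own proof of Theorem~\ref{hypersurfaces}, your argument is essentially the same: contradiction, strict monotonicity of $\W^p$ via part~(2), representation $\W^p=a_p\Vol_{n-1}(\Sigma_p)$ via part~(3), and a counting argument pitting the sublinear upper bound $\W^p\le C(n)p^{1/n}\Vol(M)^{(n-1)/n}$ against a linear-in-$p$ lower bound coming from ``few hypersurfaces''.

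The only cosmetic difference is packaging. The paper counts directly: with $\V_j$ the $j$-th smallest minimal-hypersurface volume and $\V_1=\sys_{n-1}(M)$, the number of widths $\W^i\le \V_k$ is at least $\V_k^{\,n}/(C^n\Vol(M)^{n-1})-1$, while the set $\{a\V_j:\ a\in\mathbb{N},\ j<k,\ a\V_j\le \V_k\}$ has at most $(k-1)\V_k/\V_1$ elements; choosing $\V_k>2^{1/(n-1)}C^{n/(n-1)}k^{1/(n-1)}\Vol(M)/\V_1^{1/(n-1)}$ makes the first count exceed the second, a contradiction. Your pigeonhole step is exactly this inequality rearranged: ``at most $k-1$ hypersurfaces of volume $\le T$'' gives, for every $p$ with $\W^p\le T$, at most $(k-1)\W^p/\sys_{n-1}(M)$ admissible values, hence $p\le (k-1)\W^p/\sys_{n-1}(M)$, i.e.\ $\W^p\ge p\,\sys_{n-1}(M)/(k-1)$. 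So the ``step requiring the most care'' that you flag is in fact the same one-line counting bound the paper uses, and your exponent-matching at the end reproduces the paper's choice of constant $C'(n)=2^{1/(n-1)}C(n)^{n/(n-1)}$.
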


\begin{remark}
In the proof of these results Marques and Neves
impose an additional
technical condition on $\W^k$.
Namely, they require that the infimum in the definition of
$\W^k$ is taken over only those maps $f:K \rightarrow Z_{n-1}(M)$
that have no concentration of mass.
This is defined as follows.
Using the notation of
\cite{Federer1969} let $\| c \|$ denote the Radon measure
associated with the flat chain $c$.
Then a map $f$ is said to have no concentration of mass if
$$\lim_{r \rightarrow 0}  \sup \{\|f(x)\|(B_r(a)): \ x\in K, a \in M \}=0$$

All estimates on $\W^k$ in our paper come from explicit
constructions of families of flat cycles
(in fact, polyhedral cycles), which have
no concentration of mass. Therefore
we can safely combine our estimates with the conclusions
of Theorem \ref{Marques-Neves}.
\end{remark}

\section{Isoperimetric inequality}
\label{sec:Isoperimetric inequality}

Let $(M,g_0)$ be a closed Riemannian $n$-manifold
with $\Ricci \geq -(n-1)$.
Let $g = \phi^2 g_0$ be a Riemannian
metric on $(M,g_0)$ in the conformal class of $g_0$.
Here $\phi : M_0 \rightarrow \mathbb{R}_+$ is a
    smooth function on $(M,g_0)$.  
    
\begin{notation}
We write $M_0$ for $(M,g_0)$ and $M$ for $(M, g)$.
\end{notation}


Below we use the convention that geometric structures
measured with respect to $g_0$ have a superscript zero in
their notation. Geometric structures measured with respect
to $g$ have no superscript.

\begin{notation}
     Let $\Vol_k(U)$, $d(x,y)$, $dV$, $B(x,r)$ and
     $\nabla$ denote the $k$-volume function, distance
     function, volume element, closed metric ball of
     radius $r$ about $x$, and gradient with respect
     to $g$. Let $\Vol^0_k(U)$, $d^0(x,y)$, $dV^0$,
     $B^0(x,r)$, $\nabla^0$ denote the corresponding
     quantities with respect to $g_0$.
     
\end{notation}

Let $W$ be a subset of $U$ and let $N^0_l(W)$
denote the set $\{x \in U| d^0(W,x)\leq l \}$.

\begin{lemma} \label{subdivision}
There exists a set $W \subset U$ and $l \in (0,\frac{1}{2}]$,
such that
\begin{enumerate}
\item
$\Vol_n(U)/25^n\leq \Vol_n(W) \leq 2 \Vol_n(U)/25^n $

\item
$\Vol_n(N^0_l(W)) \leq  (1- \frac{1}{25^n}) \Vol_n(U)$

\item
$\Vol^0_n(N^0_l(W) \setminus W)
\leq l^n \max\{2 \Vol^0_n(U), c(n)\}$
\end{enumerate}
\end{lemma}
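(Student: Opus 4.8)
The goal is to find a "seed" set $W$ inside $U$ whose $g$-volume is a controlled fraction of $\Vol_n(U)$, such that a small $g_0$-neighborhood $N^0_l(W)$ neither swallows too much $g$-volume (item (2)) nor is too large in $g_0$-volume (item (3)). The plan is to produce $W$ as a union of small $g_0$-balls selected greedily, following the Colbois–Maerten/Grigor'yan–Netrusov–Yau philosophy adapted to the conformal setting, and then to exploit the curvature lower bound $\Ricci(M_0)\ge -(n-1)$ through Bishop–Gromov volume comparison to control the $g_0$-sizes of neighborhoods.

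The steps, in order, would be:
\begin{enumerate}
\item \emph{Set up a Vitali-type covering.} Fix a small radius scale and use the Bishop–Gromov inequality for $M_0$ to get, for any $x$ and any $l\in(0,\tfrac12]$, a bound of the form $\Vol^0_n(B^0(x,5l)) \le c(n)\,\Vol^0_n(B^0(x,l))$ as long as we stay below a definite scale (this is where the constant $c(n)$ in item (3) enters, and where the curvature normalization $\Ricci\ge-(n-1)$ together with $l\le\tfrac12$ is used). A standard $5r$-covering argument then gives a disjoint family of $g_0$-balls $B^0(x_i,l)$ whose $5l$-dilates cover $U$.
\item \emph{Extract $W$ by accumulating $g$-volume.} Order the balls and let $W$ be the union of the first few $B^0(x_i,l)$, stopping as soon as the accumulated $g$-volume $\Vol_n(W)$ first exceeds $\Vol_n(U)/25^n$. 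Since each individual ball $B^0(x_i,l)$, being small in the $g_0$-metric, carries at most $\Vol_n(U)/25^n$ of $g$-volume once $l$ is chosen small enough (this uses that $\phi$ is continuous, hence bounded, on a fixed small $g_0$-ball, or more robustly an absolute-continuity/pigeonhole argument over a grid of candidate radii $l$), the overshoot is controlled and item (1), the two-sided bound $\Vol_n(U)/25^n \le \Vol_n(W) \le 2\Vol_n(U)/25^n$, holds. One must argue that \emph{some} admissible $l$ works — I would choose $l$ from a finite list of scales and use a pigeonhole/averaging argument to find one for which no single ball at that scale is too $g$-heavy.
\item \emph{Control item (2).} Because the selected balls are disjoint and have $g_0$-radius $l$, the neighborhood $N^0_l(W)$ is contained in the union of the concentric balls of radius $2l$; more to the point, its complement $U\setminus N^0_l(W)$ still contains plenty of the $5l$-dilates (or at least their centers), so a counting argument comparing the number of selected balls to the total shows $\Vol_n(N^0_l(W))\le(1-25^{-n})\Vol_n(U)$. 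The key point is that $W$ was chosen \emph{minimally} to cross the $25^{-n}$ threshold, so it cannot already account for more than roughly a $2\cdot 25^{-n}$ fraction of the $g$-volume, leaving the complement with at least a $(1-25^{-n})$ fraction even after fattening.
\item \emph{Control item (3).} Here $N^0_l(W)\setminus W$ is contained in an annular region around the selected balls; by the Bishop–Gromov-derived doubling bound from step 1, $\Vol^0_n(B^0(x_i,2l)\setminus B^0(x_i,l)) \le c(n)\, l^n$ uniformly (again using $l\le\tfrac12$ and the curvature bound), and summing over the disjoint family — whose total $g_0$-volume is at most $\Vol^0_n(U)$ — gives $\Vol^0_n(N^0_l(W)\setminus W) \le l^n\max\{2\Vol^0_n(U), c(n)\}$, the $\max$ accommodating the two regimes of whether the number of balls is large or small.
\end{enumerate}

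The main obstacle I anticipate is \emph{simultaneously} achieving items (1) and (2)(3) with a \emph{single} value of $l$: item (1) wants $l$ small (so that individual balls are $g$-light and the threshold can be crossed with little overshoot), while the greedy selection and the counting for item (2) must remain valid at that same $l$, and the $g_0$-volume bound in item (3) degrades like $l^n$ so cannot tolerate $l$ too large either. I would resolve this by running the argument over a dyadic (or otherwise finite) family of candidate radii $l\in(0,\tfrac12]$ and invoking a pigeonhole principle: among these scales there must be one at which the $g$-mass is not concentrated in any single ball, and that scale then serves all three requirements. The conformal factor $\phi$ enters only through the continuity/absolute-continuity estimate that makes a single small $g_0$-ball $g$-light, so no pointwise control on $\phi$ beyond smoothness is needed.
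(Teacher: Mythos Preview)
Your proposal has a genuine gap at item (2). You control $\Vol_n(W)$ by the greedy stopping rule, but offer no mechanism to control $\Vol_n(N^0_l(W))$; the phrase ``even after fattening'' is precisely where the argument breaks. The fattened set lies in $\bigcup_i B^0(x_i,2l)$ over the selected indices, and nothing prevents the conformal factor $\phi$ from concentrating nearly all of the $g$-mass of $U$ in the annuli $B^0(x_i,2l)\setminus B^0(x_i,l)$. Knowing that balls of radius $l$ are $g$-light says nothing about balls of radius $2l$, and even if you arranged the latter, the number $N$ of selected balls is unbounded (each selected ball may carry arbitrarily little $g$-volume), so summing gives no useful bound. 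Item (3) fails for a parallel reason: your two available estimates for the annular region are $N\cdot c(n)\,l^n$ (absolute Bishop--Gromov upper bound on each $B^0(x_i,2l)$) and $C(n)\Vol^0_n(U)$ (doubling plus disjointness), but neither yields the required $l^n\max\{2\Vol^0_n(U),c(n)\}$ once $N$ is large.

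The paper's proof sidesteps this by building an extremality condition into the definition of $W$. One takes $r$ to be the \emph{smallest} radius for which some ball $B^0(a,r)$ carries $g$-volume exactly $\Vol_n(U)/25^n$, and sets $W=B^0(a,r)\cap U$, $l=r/2$. Minimality of $r$ then forces \emph{every} ball of radius $r$ to have $g$-volume at most $\Vol_n(U)/25^n$; covering $B^0(a,\tfrac{3r}{2})\supset N^0_l(W)$ by at most $24.4^n$ such balls (a Bishop--Gromov packing count) yields item (2) immediately, and since $W$ is a single ball, item (3) follows from the hyperbolic volume bound on a single annulus. When $r>1$ the single ball is replaced by a union of $k$ unit balls chosen to \emph{maximize} $g$-volume among all $k$-unions, so that every competing $k$-union covering the neighborhood has no more $g$-volume than $W$ itself. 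This comparison-by-extremality is the idea your Vitali-plus-greedy scheme lacks, and a pigeonhole over dyadic scales does not supply it.
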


\begin{proof}
The argument is essentially the same as the proof of Lemma 2.2 in
the work of Colbois and Maerten \cite{ColboisMaerten}.
Let $r$ be the smallest radius with the property that $\Vol(B^0(a,r)\cap U)= \frac{\Vol(U)}{25^n}$ for some $a \in M$. 

We consider two cases. If $r\leq 1$ we define $W=B^0(a,r)\cap U$ and $l=\frac{r}{2}$.

We observe, using curvature comparison for the space $M_0$, that 
the $l$-neighbourhood of $B^0(a,r)$ can be covered by at most
$24.4^n$ balls of radius $r$.
Indeed, let $\{B^0(x_i,r/2) \}_{i=1}^N$ be a maximal collection of disjoint balls
with centers in $B^0(a,\frac{3r}{2})$. Since the collection
is maximal, the union $\bigcup B(x_i,r)$ covers $B^0(a,\frac{3r}{2})$.
Using the Bishop-Gromov comparison theorem we can estimate
the number $N$. Let $\Vol_n^0(B(x_j,\frac{r}{2})) = \min_i \{\Vol_n^0(B(x_i,\frac{r}{2})) \}$.
$$N \leq \frac{\Vol_n^0(B^0(a,\frac{3r}{2}))}{ \Vol_n^0(B(x_j,\frac{r}{2})) } \leq \frac{\Vol_n^0(B(x_j,\frac{5r}{2})) }{\Vol_n^0(B(x_j,\frac{r}{2})) } \leq \frac{V(\frac{5r}{2})}{V(\frac{r}{2})}$$
where $V(r)$ denotes the volume of a ball of radius $r$ in $n$-dimensional hyperbolic space.
When $r \in (0,1]$ this quantity is maximized for $r = 1$. We conclude that $B^0(a,\frac{3r}{2})$
can be covered by
$$N \leq \frac{\int_0 ^\frac{5}{2} \sinh^{n-1}(s)ds}{\int_0 ^\frac{1}{2} \sinh^{n-1}(s)ds}
\leq (2e^{\frac{5}{2}})^n \leq 24.4^n$$
balls, such that each of them has $\Vol_n(B^0(x_i,r)\cap U)$ at most $\frac{\Vol_n(U)}{25^n}$.
This proves inequalities (1) and (2) for the case $r\leq 1$.

Volume of a unit ball in hyperbolic 
$n$-space satisfies $V(1) \leq \omega_n e^{n-1}$,
where $\omega_n$ denotes the volume of a unit $n$-ball in Euclidean space.
Hence, $\Vol_n^0(B^0(a,\frac{3r}{2})\setminus B^0(a,r)) \leq 25^n e^{n-1} \omega_n r^n = c(n)$. This proves (3) for the case $r\leq 1$.

Suppose $r> 1$. Let $k$ be the smallest number, such that there
exists a collection of $k$ balls of radius $1$
$\{B^0(x_i,1) \}_{i=1}^k$ with $\Vol(\bigcup B^0(x_i,1)\cap U) \geq
\frac{\Vol_n(U)}{25^n}$. Let $\{B^0(x_i,1) \}_{i=1}^k$ be a collection
of $k$ balls with the property that if $\{B^0(y_i,1) \}_{i=1}^k$
is any other collection of $k$ balls then
$\Vol(\bigcup B^0(x_i,1)\cap U)\geq \Vol(\bigcup B^0(y_i,1)\cap U)$.
We set $W = \bigcup B^0(x_i,1)\cap U$. Note that by our definition
of $k$ we have $\Vol_n(W) < \frac{2\Vol_n(U)}{25^n}$.

Consider $1/2$-neighbourhood of $W$ and note that it can be
covered by at most $(24.4)^n$ sets $B_j$, where
each $B_j$ is a union of $k$ balls $B^0(y_i,1)$ of radius $1$.
By definition of $W$ we have $\Vol(B_j)\leq \Vol(W)$,
so $\Vol_n(N^0_l(W)) \leq \frac{24.4^n+1}{25^n} \Vol(U)$.
Finally, we observe that $\frac{\Vol^0(N^0_l(W))}{1/2}\leq 2 \Vol^0(U)$.
\end{proof}

\begin{theorem} \label{thm: isoperimetric}
    There exists a constant $c(n)$ such that the
    following holds: Let $U \subseteq M$ be an open
    subset.  There exists an $(n-1)$-submanifold $\Sigma \subset U$ subdividing $U$ into two open sets $U_1$ and $U_2$ such that $\Vol_n(U_i) \geq (\frac{1}{25^n}) \Vol_n(U)$ and $\Vol_{n-1}(\Sigma) \leq c(n)
\max\{1,\Vol^0_n(U)^{\frac{1}{n}}\} \Vol_n(U)^{\frac{n-1}{n}}$.
\end{theorem}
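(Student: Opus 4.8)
The plan is to derive Theorem \ref{thm: isoperimetric} from Lemma \ref{subdivision} by running the length--area (or coarea) method on the $g_0$-distance function to the set $W$, to produce a good subdividing hypersurface inside the collar $N^0_l(W)\setminus W$. First I would invoke Lemma \ref{subdivision} to obtain $W\subset U$ and $l\in(0,\tfrac12]$ with properties (1)--(3). For $s\in[0,l]$ consider the family of hypersurfaces $\Sigma_s=\partial N^0_s(W)\cap U$, which separates $U$ into the open set $U_1(s)=N^0_s(W)\cap U$ (interior) and $U_2(s)=U\setminus \overline{N^0_s(W)}$. By property (1), $\Vol_n(U_1(s))\ge \Vol_n(W)\ge \Vol_n(U)/25^n$ for every $s$, and by property (2), $\Vol_n(U_1(s))\le \Vol_n(N^0_l(W))\le(1-25^{-n})\Vol_n(U)$, so $\Vol_n(U_2(s))\ge \Vol_n(U)/25^n$. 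Thus \emph{every} slice $\Sigma_s$ already achieves the required volume split; the only remaining task is to choose a value of $s$ for which $\Vol_{n-1}(\Sigma_s)$ is small, and to note that for a.e.\ $s$ the slice is a genuine (smooth) $(n-1)$-submanifold (e.g.\ by Sard's theorem applied to a smoothing of $d^0(W,\cdot)$, or by working with the polyhedral/varifold slices provided by the coarea formula and the deformation theorem).

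The heart of the estimate is a coarea argument comparing $g$ and $g_0 = \phi^{-2}g$. Writing $\rho = d^0(W,\cdot)$, the $g_0$-coarea formula gives $\int_0^l \Vol^0_{n-1}(\Sigma_s)\,ds = \Vol^0_n(N^0_l(W)\setminus W)$, which by property (3) is at most $l^n\max\{2\Vol^0_n(U),c(n)\}$. To pass to $g$-volume of a slice I would use the conformal scaling $dV_{n-1} = \phi^{n-1}\,dV^0_{n-1}$ on a hypersurface and $dV_n=\phi^n\,dV^0_n$, together with Hölder's inequality: for a slice $\Sigma_s$,
\[
\Vol_{n-1}(\Sigma_s)=\int_{\Sigma_s}\phi^{n-1}\,dV^0_{n-1}
\le \left(\int_{\Sigma_s}\phi^{n}\,dV^0_{n-1}\right)^{\frac{n-1}{n}}\left(\Vol^0_{n-1}(\Sigma_s)\right)^{\frac1n}.
\]
Integrating this over $s\in[0,l]$ and applying Hölder once more in the $s$-variable, using $\int_0^l\!\int_{\Sigma_s}\phi^n\,dV^0_{n-1}\,ds = \Vol_n(N^0_l(W)\setminus W)\le \Vol_n(U)$ and the bound on $\int_0^l\Vol^0_{n-1}(\Sigma_s)\,ds$ from property (3), I would get
\[
\int_0^l \Vol_{n-1}(\Sigma_s)\,ds
\le \Vol_n(U)^{\frac{n-1}{n}}\Big(l^n\max\{2\Vol^0_n(U),c(n)\}\Big)^{\frac1n}.
\]
Since $l\le\tfrac12$, the factor $l^n$ on the right and the length $l$ of the interval on the left combine so that there exists $s_\ast\in[0,l]$ with $\Vol_{n-1}(\Sigma_{s_\ast}) \le c(n)\max\{1,\Vol^0_n(U)^{1/n}\}\Vol_n(U)^{\frac{n-1}{n}}$, which is exactly the claimed bound; set $\Sigma=\Sigma_{s_\ast}$ for a value $s_\ast$ that is also a regular value.

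The main obstacle I anticipate is the regularity/rigor of the slices rather than the inequality itself: $\rho=d^0(W,\cdot)$ is only Lipschitz, so $\Sigma_s$ need not be a smooth embedded submanifold for the specific good value $s_\ast$, and one must ensure the two pieces $U_1,U_2$ are honestly \emph{open} with $\Sigma$ their common topological boundary inside $U$. I would handle this either by (i) replacing $\rho$ with a smooth $\varepsilon$-approximation having the same coarea behavior up to $o(1)$ and applying Sard, or (ii) phrasing the conclusion in terms of the flat chain $\Sigma_{s_\ast}=\partial\!\left(N^0_{s_\ast}(W)\cap U\right)$ obtained from the coarea formula, which has the stated mass bound, and then invoking the deformation theorem to replace it by a polyhedral hypersurface of comparable volume — this is the form in which Theorem \ref{thm: isoperimetric} is actually used later in the paper. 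A secondary technical point is tracking the $\max\{1,\cdot\}$ and the dimensional constants through the two applications of Hölder, but that is routine bookkeeping.
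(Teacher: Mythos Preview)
Your proposal is correct and follows essentially the same route as the paper: invoke Lemma~\ref{subdivision}, run the coarea/length--area method on $\rho=d^0(W,\cdot)$ over the collar $N^0_l(W)\setminus W$, and pick a good level set. The paper packages the estimate a bit more cleanly by applying the $g$-coarea formula once and using the conformal invariance $\|\nabla f\|^n\,dV=\|\nabla^0 f\|^n\,dV^0$ (with $\|\nabla^0 f\|=1$ a.e.) to arrive directly at $\bigl(\Vol^0_n(f^{-1}(0,l))\bigr)^{1/n}\bigl(\Vol_n(f^{-1}(0,l))\bigr)^{(n-1)/n}$, whereas you obtain the identical bound via a slice-wise H\"older followed by a second H\"older in $s$; the two computations are equivalent.
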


\begin{proof}
We use the length-area method (see \cite[p. 4]{Gromov1983})
to find a small volume hypersurface in $N^0_{l}(W) \setminus W$,
where $W$ and $l$ are as in Lemma \ref{subdivision}.

Let $f(x)= d^0(W,x) |_{U} : U \rightarrow \mathbb{R}^+$
be the $d^0$ distance form $x$ to $W$ restricted to the set $U$.
By Rademacher's theorem, $f$ is differentiable almost everywhere.
By applying the co-area formula we have:

\begin{eqnarray*}
    \int_0 ^{l} \Vol_{n-1}(f^{-1}(t)) dt & = &  \int_{f^{-1}(0,l)} ||\nabla f|| dV\\
    (\text{H\"older's inequality})    & \leq & \left(\int_{f^{-1}(0,{l})} ||\nabla f||^n dV\right)^{\frac{1}{n}} \left( \Vol_n(f^{-1}(0,{l}) \right)^\frac{n-1}{n}\\
        & = & \left( \Vol^0 _n (f^{-1}(0,{l}))\right)^{\frac{1}{n}} \left(\Vol_n(f^{-1}(0,{l}))\right)^{\frac{n-1}{n}}
\end{eqnarray*}

The last equality holds since $||\nabla f||^n dV = ||\nabla^0 f||^n dV^0$ is a conformal invariant.
By Lemma \ref{subdivision} we have
$\Vol^0_n(f^{-1}(0,l))^{\frac{1}{n}} 
\leq c(n) l \max\{\Vol^0_n(U)^{\frac{1}{n}}, 1\}$.
For the second factor we apply the bound $\Vol_n(f^{-1}(0,{l}))\leq \Vol_n(U)$.
It follows that
$$\min _{r<t<2r} \Vol_{n-1}(f^{-1}(t)) \leq
c(n) \max\{\Vol(U)^{\frac{1}{n}}, 1\} \Vol_n(U)^{\frac{n-1}{n}}$$
Thus for some regular value of $t$ the level set $f^{-1}(t)$ with area no larger than average, is the desired submanifold $\Sigma$. We take $U_1 = f^{-1}([0,t))$ and $U_2 = f^{-1}((t,\infty))$.

Since $W \subseteq U_1$ by Lemma \ref{subdivision} we have
$\Vol(U_1) \geq \frac{\Vol_n(U)}{25^n}$. On the other hand,
$U_1 \subseteq N^0_{l}(W)$ of volume at most $1-\frac{\Vol_n(U)}{25^n}$
so $\Vol(U_2) \geq \frac{\Vol_n(U)}{25^n}$.
\end{proof}

\section{The width of small submanifolds}
\label{sec: small submanifolds}

In this section we will show that if a submanifold $M'$ of a Riemannian
manifold $M$ has small enough volume then its $\partial$-width can be bounded from above
in terms of $\Vol_{n-1}(\partial M')$.  First we show this for a submanifold
that is contained in a very small ball.

\begin{definition}
    For a closed Riemannian manifold $M$ and $\epsilon_0 \in (0,1)$
    define $\epsilon(M, \epsilon_0)$ to be the largest radius
    $r$ such that: for every $x \in M$ we have that $B(x,r)$
    is $(1+\epsilon_0)$-bilipschitz diffeomorphic to the
    Euclidean ball of radius $r$.
\end{definition}


\begin{lemma} \label{euclidean}
If $M'\subset M$ is contained in a ball of radius
$\epsilon(M, \epsilon_0)$ then
$\W^{\partial}(M') \leq (1+\epsilon_0) \Vol_{n-1}(\partial M')$.
\end{lemma}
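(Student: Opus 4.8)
The plan is to transport the problem to Euclidean space and there produce an explicit sweep-out by (pieces of) parallel hyperplanes. Fix a ball $B(x,r)$ with $r=\epsilon(M,\epsilon_0)$ containing $M'$, and let $\Phi\colon B(x,r)\to B^{Eucl}_r\subset\R^n$ be the $(1+\epsilon_0)$-bilipschitz diffeomorphism from the definition of $\epsilon(M,\epsilon_0)$. Set $\Omega=\Phi(M')$, a compact smooth domain in $\R^n$, and for $t\in\R$ put $\Omega_t=\Omega\cap\{x_n\le t\}$. For $t$ below $\min_\Omega x_n$ this is empty, for $t$ above $\max_\Omega x_n$ it is all of $\Omega$, and for generic $t$ its boundary chain (mod $2$) is
\[
\partial\Omega_t=\big(\partial\Omega\cap\{x_n\le t\}\big)+\big(\Omega\cap\{x_n=t\}\big).
\]
Define $\tilde f(t)=\partial\Omega_t\in Z_{n-1}(\Omega)$ and then $f(t)=\Phi^{-1}_{\#}\tilde f(t)\in Z_{n-1}(M')$, reparametrized so $t$ runs over $[0,1]$. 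By construction $f(0)$ is the trivial cycle and $f(1)=\partial M'$.

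Next I would check that $f$ is a $\partial$-sweep-out in the sense of Definition~\ref{partial width}, i.e. condition (b): after composing with the quotient $q\colon F_{n-1}(M')\to F_{n-1}(M',\partial M')$, identifying $q\circ f(0)$ with $q\circ f(1)$ gives a sweep-out of $M'$. The chains $\Phi^{-1}_{\#}\Omega_t$ are exactly the nearly-minimizing fillings of the differences $f(\tfrac{i}{m})-f(\tfrac{i-1}{m})$ appearing in Almgren's construction, and their telescoping sum is $\Phi^{-1}_{\#}\Omega=M'$, which represents the fundamental class of $(M',\partial M')$; since $\Phi$ is a diffeomorphism it induces an isomorphism on $H_n(M',\partial M')$, so the image of $[f]$ under Almgren's isomorphism is the fundamental class, hence nonzero. (At the finitely many non-generic values of $t$ one defines $f(t)$ as above via $\Phi^{-1}_{\#}\partial\Omega_t$, which is still continuous in the flat topology because $\Omega_t$ varies continuously in mass.)

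For the volume bound I would use the elementary projection estimate in $\R^n$: let $\pi_t\colon\{x_n\ge t\}\to\{x_n=t\}$ be vertical projection, which is $1$-Lipschitz, hence does not increase $(n-1)$-volume. Every point of the open slice $\Omega\cap\{x_n=t\}$ lies on a vertical ray that exits $\Omega$ upward through $\partial\Omega\cap\{x_n\ge t\}$, so $\pi_t(\partial\Omega\cap\{x_n\ge t\})\supset\Omega\cap\{x_n=t\}$ and therefore $\Vol^{Eucl}_{n-1}(\Omega\cap\{x_n=t\})\le\Vol^{Eucl}_{n-1}(\partial\Omega\cap\{x_n\ge t\})$. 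Combining with the boundary term,
\[
\Vol^{Eucl}_{n-1}(\tilde f(t))\le\Vol^{Eucl}_{n-1}(\partial\Omega\cap\{x_n\le t\})+\Vol^{Eucl}_{n-1}(\partial\Omega\cap\{x_n\ge t\})=\Vol^{Eucl}_{n-1}(\partial\Omega),
\]
since the two half-spaces are essentially disjoint. Finally, pulling back by $\Phi^{-1}$: the part of $\tilde f(t)$ lying in $\partial\Omega=\Phi(\partial M')$ pulls back into $\partial M'$, so it contributes at most $\Vol_{n-1}(\partial M')$ with no distortion factor, while the planar slice picks up at most $(1+\epsilon_0)^{n-1}$ from $\Phi^{-1}$ and another $(1+\epsilon_0)^{n-1}$ in bounding $\Vol^{Eucl}_{n-1}(\partial\Omega\cap\{x_n\ge t\})$ by $\Vol_{n-1}(\partial M')$. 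This yields $\sup_t\Vol_{n-1}(f(t))\le(1+\epsilon_0)^{2(n-1)}\Vol_{n-1}(\partial M')$, which is the asserted inequality after renaming $\epsilon_0$ (the precise power of $1+\epsilon_0$ is immaterial, since $\epsilon_0$ is a free parameter that is eventually fixed to a constant depending only on $n$).

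The main obstacle is not the geometry — the hyperplane construction and the $1$-Lipschitz projection bound are clean — but the bookkeeping around (i) verifying the homological sweep-out condition carefully through the quotient map and Almgren's isomorphism, and (ii) handling continuity of $t\mapsto f(t)$ and the mass bound at non-generic values of $t$; both are routine but need care, and the tracking of the bilipschitz constant must be organized so the boundary contribution is kept distortion-free.
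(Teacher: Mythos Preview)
Your proof is correct and follows essentially the same approach as the paper: push $M'$ to Euclidean space via the bilipschitz chart, sweep out by sublevel sets of a linear height function, and bound the planar slice by the complementary portion of $\partial\Omega$. The only cosmetic differences are that the paper justifies the slice bound by invoking the area-minimizing property of hyperplanes rather than your explicit $1$-Lipschitz projection, and the paper states the constant as $(1+\epsilon_0)$ without tracking the power $(1+\epsilon_0)^{2(n-1)}$ that you correctly identify; as you note, this is irrelevant since $\epsilon_0$ is a free parameter.
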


\begin{proof}
A $2$-dimensional version of the lemma appeared in \cite{Liokumovich2013}.  Let
$U\subset \R^n$ be the image of $M'$ under $(1+\epsilon_0)$-bilipschitz
diffeomorphism $F$.  An argument similar to that in \cite[\S 6]{Milnor1963}
shows that for a generic line $l \in \R^n$ the projection of $\partial U$ onto
$l$ is a Morse function.  Let $p$ denote such a projection map and assume that
$p(U)=[0,c]$.

Define $f: [0,c] \rightarrow Z_{n-1}(U,\Z)$ by setting
$$f(t) = \partial ( p^{-1}([0,t]) \cap U)$$

Open subsets of hyperplanes in $\R^n$ are volume minimizing
regions. Therefore we have $\Vol_{n-1}(f(t)) \leq \Vol(U)$
for all $t$. Composing $f$ with $F^{-1}$
we obtain the desired sweep-out.

\end{proof}

We extend the result of the lemma to submanifolds
of small volume.

\begin{proposition} \label{prop: base of induction}
     There exist a constant $C_1(n) > 0$, such that
     for every closed Riemannian $n$-manifold $M$,
     $\epsilon_0 >0$ and every embedded submanifold
     $M' \subset M$ of dimension $n$ and volume
     $\Vol_{n}(M') \leq \epsilon(M,\epsilon_0)^n / C_1$
     the following bound holds:
    \[ \W ^{\partial}(M') \leq
     3 (1+\epsilon_0) \Vol_{n-1}(\partial M') \]
\end{proposition}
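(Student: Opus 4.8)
The plan is to reduce the general small-volume submanifold $M'$ to the almost-Euclidean case of Lemma~\ref{euclidean} by exhausting $M'$ from the outside in by nested open sets whose successive differences each lie in a ball of radius $\epsilon(M,\epsilon_0)$, and then to assemble the $\partial$-sweep-outs of those differences using Proposition~\ref{decomposition}. The subtlety is that running Proposition~\ref{decomposition} naively accumulates the boundary volumes $\Vol_{n-1}(\partial U_i)$ of the intermediate sets, and there is no reason for those to be controlled by $\Vol_{n-1}(\partial M')$ alone — this is exactly where the hypothesis $\Vol_n(M')\leq\epsilon(M,\epsilon_0)^n/C_1$ must be used, via the isoperimetric inequality of Theorem~\ref{thm: isoperimetric} and the bilipschitz control on small balls.

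First I would fix $\epsilon := \epsilon(M,\epsilon_0)$ and cover $M'$ by finitely many balls $B(p_1,\epsilon),\dots,B(p_N,\epsilon)$. I want to peel off the part of $M'$ inside one ball at a time, but I need each "chunk removed" to have boundary $(n-1)$-volume comparable to what I already had, not larger. The clean way: build the nested exhaustion $U_0\subset U_1\subset\cdots\subset U_{m-1}=M'$ so that $A_i=\overline{U_i\setminus U_{i-1}}$ is contained in one of the balls $B(p_j,\epsilon)$, and so that $\Vol_{n-1}(\partial U_{i-1})\leq (1+\epsilon_0)\,\Vol_{n-1}(\partial M')$ for every $i$. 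Lemma~\ref{euclidean} then gives $\W^\partial(A_i)\leq(1+\epsilon_0)\Vol_{n-1}(\partial A_i)$, and since $\partial A_i\subset\partial U_i\cup\partial U_{i-1}\subset\partial M'\cup(\text{the level hypersurfaces used to cut})$, plugging into Proposition~\ref{decomposition} yields
\[
\W^\partial(M')\ \leq\ \sup_i\bigl(\W^\partial(A_i)+\Vol_{n-1}(\partial U_{i-1})\bigr)\ \leq\ 3(1+\epsilon_0)\Vol_{n-1}(\partial M'),
\]
the factor $3$ absorbing one copy from $\W^\partial(A_i)$ (bounding $\Vol_{n-1}(\partial A_i)$ by $\Vol_{n-1}(\partial U_i)+\Vol_{n-1}(\partial U_{i-1})$) and one from the additive $\Vol_{n-1}(\partial U_{i-1})$ term.

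The main work, and the main obstacle, is constructing the exhaustion with the required boundary bound. Here is the mechanism I would use. Order the balls and set $M'_0=M'$. At step $j$, I have a region $M'_j$ with $\Vol_{n-1}(\partial M'_j)$ still under control and I want to remove $M'_j\cap B(p_j,\epsilon)$; but that intersection need not have small boundary. Instead, inside the almost-Euclidean ball $B(p_j,\epsilon)$ I sweep out $M'_j\cap B(p_j,\epsilon)$ by the Euclidean slicing of Lemma~\ref{euclidean} and at the same time use the co-area / length–area estimate (as in the proof of Theorem~\ref{thm: isoperimetric}) to choose a slicing hypersurface $\Sigma_j$ whose $(n-1)$-volume is $\leq c(n)\,\Vol_n(M')^{\frac{n-1}{n}}$; since $\Vol_n(M')\leq\epsilon^n/C_1$, choosing $C_1=C_1(n)$ large makes $c(n)\Vol_n(M')^{\frac{n-1}{n}}\leq \epsilon_0\,\epsilon^{n-1}$, which is small compared to any nonzero boundary piece, so the updated region $M'_{j+1}=M'_j\setminus(\text{the part of }B(p_j,\epsilon)\text{ cut off by }\Sigma_j)$ satisfies $\Vol_{n-1}(\partial M'_{j+1})\leq\Vol_{n-1}(\partial M'_j)+\epsilon_0\epsilon^{n-1}$. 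After at most $N=N(n)$ (a dimensional bound coming from a Vitali-type packing of $M'$, again using $\Vol_n(M')$ small) steps the region is empty, and the total added boundary is $\leq N\epsilon_0\epsilon^{n-1}$, which with $C_1$ chosen appropriately is $\leq\epsilon_0\Vol_{n-1}(\partial M')$ — or, if $\partial M'=\emptyset$, is itself small enough that the base case handles it. One must be a little careful that the cutting hypersurfaces $\Sigma_j$ from different steps can be chosen disjoint (take generic level values) so that the $A_i$'s genuinely nest; this is routine transversality. Tracking these constants and the ordering of the peeling is the only real content; everything else is a direct application of Lemmas~\ref{euclidean}, Theorem~\ref{thm: isoperimetric}, and Proposition~\ref{decomposition}.
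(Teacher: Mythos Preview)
The overall architecture---exhaust $M'$ by peeling off one ball at a time, apply Lemma~\ref{euclidean} to each piece, and assemble via Proposition~\ref{decomposition}---matches the paper. The step that breaks is your control of the intermediate boundaries.

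The central gap is the claim that the number $N$ of covering balls can be taken to be a dimensional constant $N(n)$ via a ``Vitali-type packing of $M'$''. This is false: the hypothesis $\Vol_n(M')\leq\epsilon^n/C_1$ says nothing about how $M'$ is spread through $M$. A thin tube of cross-sectional radius $r\ll\epsilon$ and length $L$ has $\Vol_n(M')\sim Lr^{n-1}$, which can be made as small as you like while $L$ (hence $N\sim L/\epsilon$) is arbitrarily large. Vitali packing bounds the number of disjoint $\epsilon$-balls \emph{contained in} $M'$, not the number needed to cover it. Since your accumulated boundary error scales with $N$, the estimate collapses. The follow-up claim that $N\epsilon_0\epsilon^{n-1}\leq\epsilon_0\Vol_{n-1}(\partial M')$ ``with $C_1$ chosen appropriately'' fails for the same reason and also independently: $C_1$ constrains only $\Vol_n(M')$, and there is no lower bound on $\Vol_{n-1}(\partial M')$ in terms of $\epsilon^{n-1}$ (a tiny ball, or the same thin tube, gives $\Vol_{n-1}(\partial M')\ll\epsilon^{n-1}$). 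Note also that forcing $c(n)\Vol_n(M')^{(n-1)/n}\leq\epsilon_0\epsilon^{n-1}$ would make $C_1$ depend on $\epsilon_0$, contrary to the statement.

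The paper avoids all of this by a different mechanism, and that mechanism is the main idea of the proof. Instead of cutting with a generic coarea slice $\Sigma_j$ of merely small area, the paper replaces the portion of $\partial U_{i+1}$ inside the ball by a $\delta$-\emph{minimal} filling of the appropriate $(n-2)$-cycle (Lemma~\ref{lemma: filling in the annulus}). Minimality guarantees $\Vol_{n-1}(\partial U_i)\leq\Vol_{n-1}(\partial U_{i+1})+\delta/2^i$, so after all $N$ steps the boundary has grown by at most $\delta$, \emph{irrespective of how large $N$ is}. The small-volume hypothesis $\Vol_n(M')\leq\epsilon^n/C_1$ is used not to bound $N$ but, via the Federer--Fleming isoperimetric inequality, to force the minimal filling to stay in the outer half of the ball, so that the updated region genuinely vacates $B(x_i,\epsilon/4)$. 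That near-minimal replacement is the missing ingredient.
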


The proof of Proposition \ref{prop: base of induction}
somewhat resembles a high dimensional analog of
the Birkhoff curve shortening process.
We cover $M'$ by a finite collection of small balls $B_i$
such that balls of $1/4$ of the radius still cover $M'$.
Since $M'$ has very small volume it will not contain any
of the balls $B_i$. Hence, we can cut away the part of $\partial M'$
that is contained in $B_i$ and replace it with a minimal
surface that does not intersect $(1/4)B_i$. As a result we obtain a new
submanifold $M'' \subset M'$ that does not intersect $(1/4)B_i$.
Moreover, we can do this in such a way that
volume of the boundary does not increase.
The difference $M' \setminus M''$
is contained in a small ball, so we can sweep
it out by Lemma \ref{euclidean}.
After finitely many iterations we obtain
a submanifold that is entirely contained in one of the small balls.
We then apply Proposition \ref{decomposition}
to assemble a sweep-out of $M'$ from sweep-outs
in small balls.

In the proof of Proposition~\ref{prop: base of induction}
we will need the following isoperimetric inequality:

\begin{theorem}[Federer--Fleming]
    There exists a constant $C_2(n)>1$, such that
    every $k$-cycle $A$ in a closed unit ball in
    $B\subset \mathbb{R}^n$
    can be filled by a $(k+1)$-chain $D$ in $B$, such that:
    (i) $\Vol(D)\leq C_2(n)\Vol(A)^{\frac{k+1}{k}}$, and
    (ii) $D$ is contained in the $C_2(n)\Vol(A)^{\frac{1}{k}}$-neighbourhood  of $A$.
\end{theorem}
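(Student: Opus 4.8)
The plan is to deduce this from the Federer--Fleming deformation theorem (already invoked above) by a scaling argument, followed by a projection onto the ball. Write $A$ as a flat (indeed polyhedral, after the deformation theorem) $k$-cycle in the closed unit ball $B\subset\R^n$.

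First I would fix $\rho>0$ (to be chosen) and let $\mathcal{C}_\rho$ be the standard cubical grid of side length $\rho$ in $\R^n$. Applying the deformation theorem to the cycle $A$ yields a decomposition $A = P + \partial R$, where $P$ is a polyhedral $k$-chain supported on the $k$-skeleton of $\mathcal{C}_\rho$ with $\Vol(P)\leq c(n)\Vol(A)$; where $R$ is a $(k+1)$-chain with $\Vol(R)\leq c(n)\,\rho\,\Vol(A)$; and where $\supp P\cup\supp R$ is contained in the $c(n)\rho$-neighbourhood of $\supp A$. The usual error term supported near $\supp\partial A$ is absent because $A$ is a cycle, and applying $\partial$ to the decomposition shows $\partial P = 0$, so $P$ is a polyhedral $k$-\emph{cycle} carried by the $k$-skeleton.

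Now I would choose $\rho = (2c(n)\Vol(A))^{1/k}$, so that $\Vol(P)\leq c(n)\Vol(A) = \tfrac12\rho^k < \rho^k$. Since $P$ is a locally finite combination of the $k$-faces of $\mathcal{C}_\rho$, each of $k$-volume exactly $\rho^k$, with coefficients in $\Z_2$ (or $\Z$), a mass strictly below $\rho^k$ forces every coefficient to vanish, i.e. $P=0$. Hence $A = \partial R$, so $R$ fills $A$ with $\Vol(R)\leq c(n)\rho\Vol(A) = c(n)(2c(n))^{1/k}\Vol(A)^{\frac{k+1}{k}}$ and $\supp R$ inside the $c(n)(2c(n))^{1/k}\Vol(A)^{1/k}$-neighbourhood of $\supp A$. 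To land inside $B$ I would compose with the nearest-point projection $\pi\colon\R^n\to B$, which is $1$-Lipschitz and fixes $B$ pointwise: setting $D := \pi_{\#}R$ gives $\partial D = \pi_{\#}(\partial R) = \pi_{\#}A = A$ (as $A\subset B$), $\Vol(D)\leq\Vol(R)$, and, since $\pi$ does not increase distances and fixes $\supp A$, the set $\supp D$ stays in the same neighbourhood of $\supp A$, which is itself contained in $B$. Taking $C_2(n)$ to be the maximum of $1$ and $c(n)(2c(n))^{1/k}$ over $k=1,\dots,n-1$ (which is at most $2c(n)^2$) yields conclusions (i) and (ii).

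The argument is classical; the only points requiring care are bookkeeping — using the version of the deformation theorem that simultaneously controls $\Vol(P)$, $\Vol(R)$, the vanishing of the $\partial A$-term, and the $c(n)\rho$-localization of supports — together with the elementary but decisive observation that a skeletal cycle of mass below the volume of a single face must be zero. The projection step is a minor technical device; alternatively one could run the deformation inside a cubical neighbourhood of $B$ and retract, but the $1$-Lipschitz projection keeps the support estimate transparent.
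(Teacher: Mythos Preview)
Your argument is correct and is, in fact, the classical derivation of the Federer--Fleming isoperimetric inequality from the deformation theorem: push $A$ to the $k$-skeleton at a scale $\rho$ chosen so that the skeletal approximation $P$ has mass below that of a single $k$-face, conclude $P=0$, and read off the filling $R$ together with its mass and support bounds; the $1$-Lipschitz retraction onto $B$ then confines everything to the ball without spoiling either estimate.

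There is nothing to compare, however: the paper does not supply a proof of this theorem. It is quoted as a named classical result (hence the attribution ``Federer--Fleming'') and used as a black box in the proof of Lemma~\ref{lemma: filling in the annulus}. Your write-up would serve perfectly well as the omitted justification; the only cosmetic point is that the ``which is itself contained in $B$'' clause at the end is unnecessary and potentially misleading (the $C_2(n)\Vol(A)^{1/k}$-neighbourhood of $A$ need not lie in $B$ when $\Vol(A)$ is not small), but this does not affect the argument since condition~(ii) asks only for containment in the neighbourhood, and containment in $B$ is guaranteed separately by the projection.
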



To show Proposition~\ref{prop: base of induction} we first need to
prove the following lemma.

\begin{definition}
A $k$-chain $A$ will be called $\delta$-minimizing
if $\Vol(A) - \delta \leq \inf \{A' \in C_k(M,\Z): \partial A' = \partial M \}$.
\end{definition}

\begin{lemma} \label{lemma: filling in the annulus}
    There is a constant $C_3(n)$ such that the following holds:
    Let $B$ be a ball of radius $r_0 \leq \epsilon(\epsilon_0,M)$
    and  $A \subset \partial B$ be an $(n-1)$-chain satisfying
    $\Vol(A) \leq C_3(n) \Vol(\partial B)$.
    For every $\delta > 0$ there exists $\delta$-minimal filling
    $D$ of $\partial A$ in $B(x,r_0)$, such that
    $D \cap B(x,r_0/2) = \emptyset$.
    We may take $C_3(n) \leq \omega^{-1}_{n-1} (10 C_2(n))^{-n}$
\end{lemma}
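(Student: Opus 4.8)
The plan is to find a point $x$ such that the ball $B(x, r_0/2)$ contains a definite fraction of the volume of $B$, so that cutting it out leaves an annular region into which a minimal filling must be pushed. More precisely, I would proceed as follows.

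\textbf{Step 1: Exhibiting a filling with small volume.} Since $B = B(x,r_0)$ is $(1+\epsilon_0)$-bilipschitz to a Euclidean ball, after rescaling we may work in the unit ball in $\mathbb{R}^n$ up to a bounded distortion of all volumes. The cycle $\partial A$ is an $(n-2)$-cycle lying in $\partial B$, and $\Vol(\partial A) \leq \Vol(A) \leq C_3(n)\Vol(\partial B)$ (using the isoperimetric/deformation estimate on the sphere $\partial B \cong S^{n-1}$, or just that $\partial A = \partial A$ bounds $A$). Apply the Federer--Fleming isoperimetric inequality (Theorem quoted just above) to $\partial A$ inside $B$: there is a filling $D_0$ with $\Vol(D_0) \leq C_2(n)\Vol(\partial A)^{\frac{n-1}{n-2}}$ and $D_0$ contained in the $C_2(n)\Vol(\partial A)^{\frac{1}{n-2}}$-neighbourhood of $\partial A \subset \partial B$. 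If $C_3(n)$ is chosen small enough — here is where the explicit bound $C_3(n) \leq \omega_{n-1}^{-1}(10C_2(n))^{-n}$ comes in — then both $\Vol(D_0)$ and the size of that neighbourhood are less than, say, $r_0/2$ (in the rescaled picture, less than $1/2$), so $D_0$ stays inside the annulus $B(x,r_0) \setminus B(x, r_0/2)$; in particular $D_0 \cap B(x,r_0/2) = \emptyset$.

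\textbf{Step 2: Passing to a $\delta$-minimizing filling.} Now minimize: let $D$ be a $\delta$-minimizing $(n-1)$-chain in $B$ with $\partial D = \partial A$ (such $D$ exists by the compactness theorem for integral currents / flat chains, or simply by taking a near-minimizer). Since $D_0$ is a competitor, $\Vol(D) \leq \Vol(D_0) + \delta \leq C_2(n)\Vol(\partial A)^{\frac{n-1}{n-2}} + \delta$. But I still need $D$ itself (not just the competitor $D_0$) to avoid $B(x, r_0/2)$. For this I use a cut-and-paste / replacement argument: if $D$ entered $B(x, r_0/2)$, intersect $D$ with a generic sphere $\partial B(x, s)$ for $s \in (r_0/2, r_0)$ chosen so that $\Vol_{n-2}(D \cap \partial B(x,s))$ is no larger than average, which by the coarea inequality is at most $\frac{2}{r_0}\Vol(D)$; this is small, so we may fill $D \cap \partial B(x,s)$ on the sphere by a chain $E \subset \partial B(x,s)$ of controlled volume and replace the inner part of $D$ by $E$. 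The modified chain has smaller volume (if $C_3$ is small enough the replacement strictly decreases volume, contradicting $\delta$-minimality for $\delta$ small, or one argues with a slightly sharper minimality constant), and avoids $B(x, r_0/2)$. Tracking the constants, one verifies $C_3(n) \leq \omega_{n-1}^{-1}(10C_2(n))^{-n}$ suffices.

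\textbf{Main obstacle.} The genuinely delicate point is Step 2: ensuring the \emph{actual} minimizer $D$ — and not merely some comparison chain — is disjoint from the half-ball. The clean way is the radial cut-and-fill described above, but one has to be careful that the filling of the slice $D\cap\partial B(x,s)$ inside the sphere does not itself cost too much; this requires the isoperimetric constant of the round sphere $\partial B(x,s)$, and matching this against the Federer--Fleming constant $C_2(n)$ is exactly what pins down the numerical value of $C_3(n)$. A secondary subtlety is that these are $\Z$-coefficient (or $\Z_2$) flat chains with boundary in $\partial B$, so one works with $D \in C_{n-1}(B,\Z)$ relative to $\partial B$ and uses the relative deformation theorem; this is routine given the machinery recalled in Section~\ref{sec: definitions}. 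I would also note that the exponents $\frac{n-1}{n-2}$ and $\frac{1}{n-2}$ require $n \geq 3$, which is part of the standing hypothesis, and that for $n=3$ the cycle $\partial A$ is $1$-dimensional and everything still goes through.
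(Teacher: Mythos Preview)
There is a genuine gap. In Step 1 you assert $\Vol(\partial A)\le \Vol(A)$, which is false in general: an $(n-1)$-chain in $\partial B$ of small area can have arbitrarily large $(n-2)$-boundary (think of a very thin annulus on the sphere). So the Federer--Fleming neighbourhood bound (ii) gives no control, and Step 1 yields nothing. This is not fatal by itself, since $A\subset\partial B$ is already a filling of $\partial A$ avoiding $B(x,r_0/2)$; the whole content of the lemma is that a \emph{$\delta$-minimal} filling avoids the half-ball. That is where Step 2 must do the work, and it does not: choosing $s\in(r_0/2,r_0)$ with small slice and replacing $D\cap B(x,s)$ by a cap $E$ on $\partial B(x,s)$ does not contradict $\delta$-minimality, because you have no lower bound on $\Vol(D\cap B(x,s))$. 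The minimizer could graze $B(x,r_0/2)$ with negligible mass inside while the cap $E$ still has volume on the order of $\Vol(A)$, so the modified chain need not be smaller. Nor does this surgery produce a $\delta$-minimal chain, since $\Vol(E)$ is bounded in terms of $\Vol(A)$, not in terms of $\delta$.

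The missing idea is a monotonicity-type argument showing that a near-minimizer stays close to its boundary. The paper carries this out by looking at level sets of the distance to $\partial A$ (not to the center $x$): with $V(r)=\Vol_{n-1}\{d(\cdot,\partial A)>r\}$ and $A(r)=\Vol_{n-2}\{d(\cdot,\partial A)=r\}$, $\delta'$-minimality plus the isoperimetric inequality forces $V(r)\le 2C_2(n)A(r)^{\frac{n-1}{n-2}}+\delta'$, and combined with $|V'|\ge A$ this differential inequality drives $V(r)$ down to $\le\delta'$ by $r\le r_0/4$ (here the smallness $\Vol(A)\le C_3(n)\Vol(\partial B)$ enters, via $V(0)\le\Vol(A)$). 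Only then does one perform a coarea cut---now on the tiny residual mass $\le\delta'$---and fill the resulting slice by a cap of volume $\le\delta/2$, obtaining a $\delta$-minimal filling disjoint from $B(x,r_0/2)$. Your spherical cut in Step 2 would work \emph{after} this monotonicity step, but not as a substitute for it.
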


The proof of Lemma~\ref{lemma: filling in the annulus} is a variation
of an argument in \cite[\S 4.2-3]{Gromov1983}. See also \cite[Lemma 6]{Guth2006}.

\begin{proof}
Fix $\delta' < \delta r_0 / 100C_2(n)$. Let $D_1$ be some $\delta'$-minimal
filling of $\partial A$ in $B$. We claim that $D_1$ is contained in a
$r_0/4$-neighbourhood of $\partial A$ except for a subset of volume at
most $\delta'$.

Since $B$ is 2-bilipschitz homeomorphic to a Euclidean ball, we may
apply the Federer-Fleming isoperimetric inequality (with a worse
constant) inside $B$. We obtain that every $(n-2)$-cycle $S$ can be
filled in $B$ by an $(n-1)$-chain of mass at most
$2 C_2(n)\Vol(S)^{\frac{n-1}{n-2}}$.

Let $A(r) = \Vol_{n-2}(\{x \in D_1 : d(x, \partial A) = r \})$ and
$V(r) = \Vol_{n-1}(\{x \in D_1 : d(x, \partial A) > r \})$.
The co-area inequality implies that $|V'(r)| \geq A(r)$.

It follows by the $\delta'$-minimality of $D_1$ that every open
subset $U \subset D_1$ not meeting $\partial A$ must have
volume at most:
\[\Vol_{n-1}(U) \leq  2 C_2(n)\Vol_{n-2}(\partial U)^{\frac{n-1}{n-2}} + \delta' \]

In particular, we have:
$V(r) \leq 2C_2(n) A(r)^{\frac{n-1}{n-2}} + \delta'$.
Applying the co-area inequality again we obtain:

\[ \frac{d}{dr} \left([V(r) - \delta']^{\frac{1}{n-1}}\right)
\leq \frac{-1}{(n-1) (2C_2(n))^{\frac{n-1}{n-2}}}\]

Hence, $V(r) \leq \delta'$ for some
\begin{eqnarray*}
    r & \leq & (n-1) (2C_2(n))^{\frac{n-1}{n-2}}\Vol(D_1)^{\frac{1}{n-1}}\\
    & \leq & (n-1) (2C_2(n))^{\frac{n-1}{n-2}}\Vol(A)^{\frac{1}{n-1}}\\
    & \leq &  2 (n-1) (2C_2(n))^{\frac{n-1}{n-2}} \left( C_3(n) n \omega_{n} r_0^{n-1}\right)^{\frac{1}{n-1}} \leq r_0/4\\
\end{eqnarray*}

We will now cut off the piece of $D_1$ that lies outside of
$(r_0/4)$-neighbourhood of $\partial D_1$.  Again, by the co-area
inequality we have that: $A(r') \leq \frac{8}{r_0} \delta'$
for some $(1/4) r_0 \leq r' \leq (3/8) r_0$.
The Federer-Fleming isoperimetric inequality gives a filling of
$\{d(x, \partial D_1) = r' \}$ by
an $(n-1)$-chain $D_2$ satisfying:
\[\Vol(D_2) \leq
    2 C_2(n) \left(\frac{8}{r_0} \delta'\right)^{\frac{n-1}{n-2}}
    \leq \delta/2 \]
Moreover, the filling has the property that the distance from
$\{d(x, \partial D_1) = r' \}$ to every point of
$D_2$ is at most $2 C_2(n) \left(\frac{8}{r_0} \delta'\right)^{\frac{1}{n-1}}\leq r_0/8$.
This gives the desired filling.
\end{proof}

Now we prove Proposition \ref{prop: base of induction}.
We will construct a decomposition of $M$ into open sets
and then apply Proposition \ref{decomposition}.

\begin{proof}
Set $C_1(n) = 4^n \omega_{n-1} (10 C_3(n))^n$.
Let $\epsilon = \epsilon(M,\epsilon_0)$ and assume that $M' \subset M$
has volume bounded by $1/C_1(n) \epsilon^n$. Let $B_i=B(x_i,\epsilon)$ for $i=1,...,N$, be a collection of balls such that $M'$ is contained in the interior of  $\bigcup B(x_i,\epsilon/4)$. Fix $\delta>0$. We will construct a collection of open subsets $U_1 \subset ... \subset U_{N}$, with the following properties:

\begin{enumerate}
    \item $U_{N} = M'$.
    \item $\Vol(\partial U_{i})\leq \Vol(\partial U_{i+1}) + \delta / 2^i$.
    \item $U_i \cap \bigcup_{j=i+1}^{N} B(x_j,\epsilon/4)$ is empty.
\end{enumerate}

Assume that $U_{i+1},...,U_N$ have been defined.
If $U_{i+1} \cap B(x_i,\epsilon/4)$ is empty
we set $U_i = U_{i+1}$.
 Otherwise, to construct $U_i$ we proceed as follows.


\begin{figure}
   \centering  
    \includegraphics[scale=1.2]{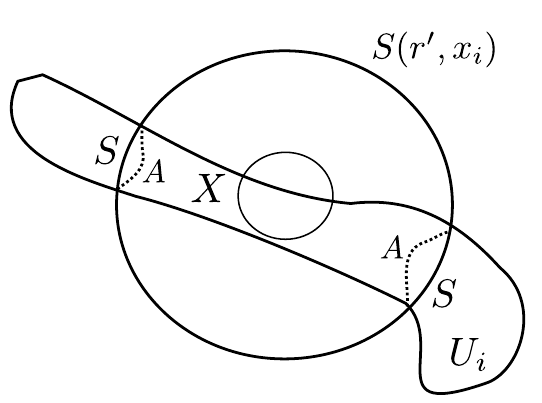}
    \caption{Proof of Proposition \ref{prop: base of induction}}
\end{figure}

By the co-area inequality we can find $S(x_i,r') =\partial B(x_i,r')$, with $\frac{3}{4} \epsilon < r'< \epsilon$, such that
$S=U_{i+1} \cap S(r',x_i)$ satisfies
$\Vol_{n-1}(S) \leq 4 \Vol_n(U_{i+1} \cap B(x_i, \epsilon))^{1-1/n}$.
By Lemma~\ref{lemma: filling in the annulus}
there exists an $(n-1)$-chain $A \subset B(x_i,r')$
with $\partial A = \partial S$ which is $(\delta/2^i)$-minimizing and $A$ does not intersect $B(x_i, \epsilon/4)$.
Let $X$ denote the union of the connected components
of $U_{i+1} \setminus A$ that intersect $B(x_i, \epsilon/4)$.
We define $U_i = U_{i+1} \setminus X$.
Note that the volume decreased and
by $\delta/2^i$-minimality of $A$ and the volume of the
boundary could not have increased by more than $\delta/2^i$.

By Lemma \ref{euclidean} we have
$\W^{\partial}(X) \leq 2 (1+\epsilon_0) \Vol_{n-1}(\partial M')+ \delta$.
By Proposition \ref{decomposition} we have
$\W^{\partial}(M) \leq 3 (1+\epsilon_0) \Vol_{n-1}(\partial M')+ 2\delta$.
Since $\delta$ can be chosen arbitrarily small this concludes
the proof of Proposition \ref{prop: base of induction}.

\end{proof}

%
%

\section{Proof of the width inequality}
\label{sec: proof of the width volume inequality}

In this section we prove Theorem \ref{thm: main}.

\begin{theorem} \label{thm: main with boundary}
Let $M_0$ be a manifold with $\Ricci \geq -(n-1)$ and let
$M$ be in the conformal class of $M_0$. Let
$M' \subseteq M$ be an $n$-dimensional submanifold.
There exists a constant $C(n)$ that depends on the dimension,
such that:
\[ \W^{\partial}(M') \leq
C(n) \max \{1,\Vol_n ^0(M')^{\frac{1}{n}} \} \Vol_n(M')^{\frac{n-1}{n}}+3 \Vol_{n-1}(\partial M') \]
\end{theorem}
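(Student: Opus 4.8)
The plan is to argue by induction on the "volume scale" of $M'$, using Theorem~\ref{thm: isoperimetric} to cut $M'$ in half (in $n$-volume) at the cost of a controlled $(n-1)$-volume surface, and using Proposition~\ref{prop: base of induction} as the base case once the pieces are small enough to be nearly Euclidean. Concretely, fix $\epsilon_0 = 1$ (say), set $\epsilon = \epsilon(M,\epsilon_0)$, and let $v_0 = \epsilon^n / C_1(n)$ be the threshold from Proposition~\ref{prop: base of induction}. The key quantity is
\[
\mathcal{V}(M') := \max\{1, \Vol^0_n(M')^{1/n}\} \,\Vol_n(M')^{(n-1)/n},
\]
and I will prove $\W^\partial(M') \le C(n)\,\mathcal{V}(M') + 3\Vol_{n-1}(\partial M')$ by induction on the least integer $m$ with $\Vol_n(M') \le 2^m v_0$.

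For the \emph{base case} $m = 0$, i.e. $\Vol_n(M') \le v_0 = \epsilon^n/C_1(n)$, Proposition~\ref{prop: base of induction} gives immediately $\W^\partial(M') \le 6\Vol_{n-1}(\partial M')$, which is dominated by the desired bound (indeed we don't even need the $\mathcal V$ term here). For the \emph{inductive step}, suppose $\Vol_n(M') \le 2^m v_0$ with $m \ge 1$. Apply Theorem~\ref{thm: isoperimetric} to $U = \mathrm{int}(M')$: we obtain a hypersurface $\Sigma$ splitting $M'$ into $U_1, U_2$ with $\Vol_n(U_i) \ge 25^{-n}\Vol_n(M')$ — hence in particular $\Vol_n(U_i) \le (1 - 25^{-n})\Vol_n(M') \le 2^{m-1}v_0 \cdot \tfrac{1-25^{-n}}{1/2} < 2^{m-1}v_0 \cdot 2$; more carefully, after a fixed number $N(n) = O(25^n)$ of successive bisections every piece has $n$-volume at most $\tfrac12 \Vol_n(M')$, so the induction hypothesis applies to each piece. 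The surface produced at each cut has $(n-1)$-volume at most $c(n)\max\{1, \Vol^0_n(M')^{1/n}\}\Vol_n(M')^{(n-1)/n} = c(n)\mathcal V(M')$. Arrange the pieces of this repeated subdivision as a nested family $\emptyset = V_0 \subset V_1 \subset \cdots \subset V_k = M'$ whose successive differences $A_j = \overline{V_j \setminus V_{j-1}}$ are the pieces of the subdivision, with $\Vol_{n-1}(\partial V_j) \le c'(n)\mathcal V(M') + \Vol_{n-1}(\partial M')$ (the subdivision surfaces accumulate, but there are only $O(25^n)$ of them and each is bounded by $c(n)\mathcal V(M')$; pieces of $\partial M'$ also split among the $A_j$). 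Then Proposition~\ref{decomposition} gives
\[
\W^\partial(M') \le \max_j\big( \W^\partial(A_j) + \Vol_{n-1}(\partial V_{j-1}) \big),
\]
and applying the induction hypothesis to each $A_j$ — noting $\mathcal V(A_j) \le \mathcal V(M')$ since both $\Vol^0_n$ and $\Vol_n$ are monotone under inclusion and $(n-1)/n, 1/n > 0$, and $\Vol_{n-1}(\partial A_j) \le \Vol_{n-1}(\partial M') + c''(n)\mathcal V(M')$ — yields $\W^\partial(M') \le (C(n) + c'''(n))\mathcal V(M') + 3\Vol_{n-1}(\partial M') + 3c''''(n)\mathcal V(M')$. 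The point is that the coefficient of $\Vol_{n-1}(\partial M')$ does \emph{not} grow: it stays $3$ through the induction, while all the surface-area costs of subdivision are absorbed into the $\mathcal V$ term with a constant depending only on $n$.

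The main obstacle is the bookkeeping that keeps the constant in front of $\Vol_{n-1}(\partial M')$ equal to $3$ rather than letting it blow up with the depth of the induction: each application of Proposition~\ref{decomposition} adds boundary terms $\Vol_{n-1}(\partial V_{j-1})$, and these include both the freshly-created subdivision surfaces and the original boundary $\partial M'$. The resolution is to treat the subdivision surfaces as contributions to the "$\mathcal V$ budget" (there are only boundedly many per level and each is $O_n(\mathcal V(M'))$), and to observe that the portions of $\partial M'$ distributed among the $A_j$ are disjoint, so they never get multiplied — exactly the same mechanism that makes the "$+3\Vol_{n-1}(\partial M')$" in Proposition~\ref{prop: base of induction} a constant-coefficient term. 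One must also check that the induction is well-founded: the total number of bisections needed to bring all pieces below $v_0$ is finite because each round of $N(n)$ bisections halves the maximal piece volume, and $\Vol_n(M')$ is finite. Finally, Theorem~\ref{thm: main} follows from the case $M' = M$ (so $\partial M' = \emptyset$), combined with the definition of $\MCV(M)$: choosing the conformal representative $M_0$ nearly achieving $\MCV(M)$ gives $\Vol^0_n(M) \le \MCV(M) + \epsilon$, and $\W(M) \le \W^\partial(M)$.
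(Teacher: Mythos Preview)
Your overall architecture matches the paper's: induct on a volume scale, use Theorem~\ref{thm: isoperimetric} to cut, Proposition~\ref{prop: base of induction} for the base case, and Proposition~\ref{decomposition} to glue. You are also right that the coefficient $3$ in front of $\Vol_{n-1}(\partial M')$ is stable, since the portions of $\partial M'$ lying in the $A_j$ are disjoint and the extra term $\Vol_{n-1}(\partial V_{j-1})$ from Proposition~\ref{decomposition} contributes those portions with coefficient $1$, not $3$.

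There is, however, a genuine gap in closing the induction on the $\mathcal V$ term. You apply the inductive hypothesis to each piece $A_j$ and bound $\mathcal V(A_j)\le\mathcal V(M')$ by monotonicity alone; together with the cutting costs this produces a bound of the form
\[
\W^\partial(M')\le \bigl(C(n)+c'''(n)+3c''''(n)\bigr)\,\mathcal V(M')+3\Vol_{n-1}(\partial M').
\]
But this is \emph{not} the inductive statement: the coefficient of $\mathcal V(M')$ has grown from $C(n)$ to $C(n)+O_n(1)$. After $m$ levels of the induction you would have $C(n)+O_n(m)$, and $m\sim\log\bigl(\Vol_n(M')/v_0\bigr)$ depends on $M'$, not just on $n$. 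So as written the argument does not give a dimensional constant.

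The missing ingredient is the \emph{strict} volume decay of the pieces. Since $\Vol_n(A_j)\le\theta_0\Vol_n(M')$ with $\theta_0<1$ (either $\theta_0=1-25^{-n}$ after a single cut, or $\theta_0=\tfrac12$ after your $N(n)$ cuts), one actually has $\mathcal V(A_j)\le\theta_0^{(n-1)/n}\mathcal V(M')$. Using this instead of mere monotonicity, the induction closes provided
\[
C(n)\,\theta_0^{(n-1)/n}+K(n)\le C(n),\qquad\text{i.e.}\qquad C(n)\ge \frac{K(n)}{1-\theta_0^{(n-1)/n}},
\]
where $K(n)$ collects all the cutting-surface costs at one level. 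This is precisely how the paper proceeds: it makes a \emph{single} cut per step (so two pieces $M_1,M_2$ with $\Vol_n(M_i)\le(1-25^{-n})\Vol_n(M')$), applies Proposition~\ref{decomposition} with just $U_0=M'\setminus M_2$ and $U_1=M'$, and chooses $C(n)=4\cdot 25^n c(n)$ so that $C(n)(1-25^{-n})^{(n-1)/n}+4c(n)\le C(n)$. Your many-cuts-per-level scheme would work too once you feed the decay back in, but the single-cut version is cleaner.
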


Theorem~\ref{thm: main} follows as a special case.

\begin{proof}
Pick the constant $C(n)=4 \cdot 25^n c(n)$,
where $c(n)$ is the constant in Theorem \ref{thm: isoperimetric}.

Let $\epsilon > 0$ be small enough that every submanifold of volume at most
$25^n \epsilon$ satisfies conclusions of Theorem \ref{prop: base of induction}.
Suppose that $M' \subseteq M$, and pick $k$ so that: $k \epsilon < \Vol(M') \leq (k+1)
\epsilon$ and $k>25^n$. We proceed by induction on $k$.

Assume the desired sweep-out exists for every open subset of volume at most $k \epsilon$.
By Lemma \ref{thm: isoperimetric} we can find an $(n-1)$-submanifold $\Sigma$ subdividing $M'$ into $M_1$ and $M_2$ of volume at most $c(n) \max \{1, \Vol^0_n(M')^{\frac{1}{n}} \} \Vol_n(M')^{\frac{n-1}{n}}$, such that $\Vol_n(M_i)\leq (1- 1/25^n) \Vol_n(M')$.
Since $k > 25^n$ the inductive hypothesis is applicable to both halves $M_i$.

By inductive hypothesis we have
\[ W^{\partial}(M_i) \leq 3(\Vol(\partial M' \cap M_i) + \Vol(\Sigma)) +
C(n) \max \{1,\Vol_n(M)^{\frac{1}{n}} \} \Vol_n(U_i)^{\frac{n-1}{n}}
\]

We apply Proposition \ref{decomposition} with $U_0 = M' \setminus M_2$
and $U_1 = M'$. We obtain
\[\W^{\partial}(M') \leq 3 \Vol(\partial M')+ 4 \Vol(\Sigma) + 
C(n) \max\{1, \Vol_n^0(M')^{\frac{1}{n}} \} \max_{i=1,2} \{\Vol^n(M_i)^{\frac{n-1}{n}}\}\]

We use bounds $\Vol_n(M_i)^{\frac{n-1}{n}} \leq \frac{25^n-1}{25^n} \Vol_n(M')$ 
and 
$$\Vol_{n-1}(\Sigma) \leq c(n) \max \{1, \Vol^0_n(M')^{\frac{1}{n}} \} Vol_n(M')^{\frac{n-1}{n}}$$

We compute that the resulting expression satisfies the desired bound.

\end{proof}

Theorem \ref{thm: main} follows from 
Theorem \ref{thm: main with boundary} by taking the infimum
of the total volume of $M_0$ over all 
manifolds $M_0$ that are conformally equivalent 
to $M$ and
have $\Ricci \geq -(n-1)$.

\section{The width of surfaces} 
\label{sec: width of surfaces}

In this section we prove a theorem of Balacheff and Sabourau
\cite{BalacheffSabourau2010} with an improved constant.
Note that the 
result also follows as an immediate corollary of Theorem \ref{thm: main}
with a worse constant.
However, we observed that one can use a slightly modified version
of our proof and invoke the Riemann-Roch theorem
to get a somewhat better constant.

Below we prove a version of Theorem~\ref{thm: main} which
allows us to bound width of a manifold $M$
if $M$ admits a conformal map into some nice space $M_0$
with a small number of pre-images. We will then
estimate the width of surfaces by applying uniformization theorem and the Riemann-Roch theorem. 
Our argument
is parallel to the analogous arguments of Yang and Yau \cite{YangYau} and \cite[\S 4]{Korevaar1993} for
eigenvalues of the Laplacian on Riemann surfaces.

\begin{definition}\label{definition: covering constants}
    Define $\tau =\tau(M_0)$ and $\nu=\nu(M_0)$ as
    follows: $\tau$ is the least number such that
    any annulus $B^0(x,2r) \setminus B^0(x,r)$ in $M_0$
    can be covered by $\tau$ balls of radius $r$.
    We let $\nu(M_0)$ be the least constant such that
    $\Vol_n^0 (B^0(x,r)) \leq \nu r^n$ for all $r > 0$
    and all $x \in M_0$.
\end{definition}

\begin{theorem} \label{thm: mappings}
Let $\Phi : (M,g) \rightarrow (M_0,g_0)$ be a conformal map.
Suppose the following holds:
(i) Any point $x \in M_0$ has at most $K$ pre-images,
(ii) The set $\{x \in M, d\Phi(x)=0 \}$ is of measure 0. It follows that:
\[ \W(M)
    \leq \frac{8 \nu^{\frac{1}{n}} K^{\frac{1}{n}}}
    {1-(\frac{\tau+1}{\tau+2})^{\frac{n-1}{n}}}  \Vol(M)^{\frac{n-1}{n}}
\]
\end{theorem}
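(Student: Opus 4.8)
The plan is to mimic the structure of the proof of Theorem \ref{thm: main with boundary}, but replacing the role of the Colbois--Maerten subdivision lemma (Lemma \ref{subdivision}) with a subdivision lemma governed by the covering geometry of $M_0$ as encoded by $\tau$ and $\nu$, and pulled back through $\Phi$. The key observation is the same conformal invariance used in Theorem \ref{thm: isoperimetric}: the integrand $\|\nabla f\|^n \, dV$ is a conformal invariant, so a distance function on $M_0$ pulled back via $\Phi$ produces level sets whose $(n-1)$-volumes are controlled by a H\"older estimate in which only the $g_0$-volume of a thin neighborhood (counted with multiplicity at most $K$) appears. Concretely, I would first establish an analogue of Lemma \ref{subdivision}: given an open set $U \subseteq M$, let $r$ be the smallest radius with $\Vol_n(\Phi^{-1}(B^0(a,r)) \cap U) = \Vol_n(U)/\lambda$ for a suitable $\lambda = \lambda(\tau)$ (taking $\lambda = (\tau+2)^{?}$ so that the annulus-covering bound gives property (2)); set $W = \Phi^{-1}(B^0(a,r)) \cap U$ and $l = r/2$ (after rescaling $M_0$ so radii are $\leq 1$, or just tracking scale since there is no curvature constraint here). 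The covering of the $l$-neighbourhood of $B^0(a,r)$ by $\tau+1$ (or so) balls of radius $r$ — one from the annulus bound plus the central ball — gives the upper volume bound on the neighbourhood, and $\Vol^0_n(\Phi^{-1}(B^0(a,\tfrac{3r}{2}) \setminus B^0(a,r))) \leq K \nu (\tfrac{3r}{2})^n$ gives the thin-shell estimate; this is where $K$ and $\nu$ enter.

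Next I would run the length-area argument exactly as in Theorem \ref{thm: isoperimetric}, using $f(x) = d^0(a, \Phi(x))$ restricted to $U$ (or $d^0(W_0, \Phi(x))$ where $W_0 = B^0(a,r)$). The co-area formula together with H\"older's inequality and the conformal invariance of $\|\nabla f\|^n\, dV$ gives
\[
\int_0^l \Vol_{n-1}(f^{-1}(t))\, dt \leq \bigl(\Vol^0_n(\Phi^{-1}(f^{-1}(0,l)))\bigr)^{\frac1n} \bigl(\Vol_n(U)\bigr)^{\frac{n-1}{n}} \leq (K\nu)^{\frac1n} l \cdot \Vol_n(U)^{\frac{n-1}{n}},
\]
so some regular level set $\Sigma = f^{-1}(t)$ subdivides $U$ into $U_1, U_2$ with $\Vol_n(U_i) \geq c(\tau)\Vol_n(U)$ and $\Vol_{n-1}(\Sigma) \leq 2(K\nu)^{\frac1n}\Vol_n(U)^{\frac{n-1}{n}}$ — note the absence of any $\max\{1,\cdot\}$ term, since there is no lower curvature bound to force a scale and the estimate is scale-invariant. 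Then I would iterate this subdivision, as in Theorem \ref{thm: main with boundary}, combining it with Proposition \ref{prop: base of induction} at the base case (small-volume pieces are almost-Euclidean) and assembling via Proposition \ref{decomposition}. The geometric series produced by the volume decay ratio $\frac{\tau+1}{\tau+2}$ (raised to the $\frac{n-1}{n}$ power at each stage) is exactly what produces the denominator $1 - \bigl(\frac{\tau+1}{\tau+2}\bigr)^{\frac{n-1}{n}}$ in the stated bound, and the factor $8$ absorbs the constants from the subdivision estimate and the factor $3$ from Proposition \ref{prop: base of induction}.

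The main obstacle I anticipate is the bookkeeping in the subdivision lemma: making sure the multiplicity bound $K$ is applied correctly (it multiplies the $g_0$-volume of the pulled-back shell, but one must check that the pieces $U_i$ produced still have a definite proportion of the $g$-volume of $U$, which depends only on $\tau$ and not on $K$), and verifying that the covering constant $\tau$ genuinely yields the claimed volume-decay ratio rather than something weaker — one must choose the threshold fraction $1/\lambda$ so that the $l$-neighbourhood, covered by roughly $\tau+1$ balls each carrying at most a $1/\lambda$ fraction, still misses a definite fraction of $U$; taking $\lambda = \tau+2$ works. A secondary subtlety is that the branch locus $\{d\Phi = 0\}$ must not interfere: hypothesis (ii) guarantees it has measure zero, so it contributes nothing to the co-area integrals, and the conformal-invariance identity $\|\nabla f\|^n\, dV = \|\nabla^0 f\|^n\, dV^0$ continues to hold away from it. Finally, one should note (as the paper does after Theorem \ref{thm: main}) that this immediately yields the genus-$g$ surface bound by taking $M_0$ the constant-curvature model and $\Phi$ a branched conformal cover of controlled degree supplied by Riemann--Roch, which is the content of the subsequent Theorem \ref{BS}.
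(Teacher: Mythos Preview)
Your proposal is correct and follows essentially the same route as the paper: choose the threshold fraction $\lambda=\tau+2$, pull back the $g_0$-distance to a ball via $\Phi$, run the co-area/H\"older length-area argument using the conformal invariance of $\|\nabla f\|^n\,dV$ (with the multiplicity factor $K$ and the ball-volume bound $\nu$ entering exactly where you say), and then iterate as in Section~\ref{sec: proof of the width volume inequality} to produce the geometric series with ratio $\bigl(\tfrac{\tau+1}{\tau+2}\bigr)^{(n-1)/n}$. The only cosmetic difference is that the paper integrates over the annulus $(r,2r)$ rather than $(r,\tfrac{3r}{2})$, which is what makes the leading constant come out as $8$ rather than the slightly larger number your choice $l=r/2$ would give.
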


\begin{proof}
First, we prove an analog of our isoperimetric inequality,
Theorem~\ref{thm: isoperimetric}.

Let $U$ be an open set in $M$. We show that there is an $(n-1)$-submanifold
$\Sigma \subset U$ such that $U \setminus \Sigma = U_1 \sqcup U_2$ with
$\Vol_n(U_i) \geq \frac{ 1 }{ \tau + 2 } \Vol_n(U)$ and $\Vol_{n-1}(\Sigma)
\leq 2 \nu^{\frac{1}{n}} K^{\frac{ 1 }{ n } }\Vol_n(U)^{\frac{ n-1 }{ n }}$.

Let $p \in M$
and $u$ and $v$ be vectors in the tangent space $T_p M$.
Since $\Phi$ is conformal we have
 $$\langle \Phi_* u, \Phi_* v \rangle_{g_0} = \phi(x) \langle u, v \rangle_g$$
for some non-negative function $\phi$.
In a neighbourhood of a point $p \in M \setminus \{x \in M, d\Phi(x)=0 \}$
map $\Phi$ is a local diffeomorpism and

\[ ||\nabla (f \circ \Phi)|| = \phi^{1/2} ||\nabla f||
    \quad dV_{g} = \phi^{-n/2} dV_{g_0} \]

where $f: M_0 \rightarrow \R$ is a smooth function and
$dV_{g}$, $dV_{g_0}$ are volume elements.

The fact that the measure of the
set $\{x \in M, d\Phi(x)=0 \}$ is zero guarantees that
$\lim_{r \rightarrow 0} \Vol_n(\Phi^{-1}(B^0(a,r))) =0 $
for all $a \in M_0$.
Let $r$ be the smallest radius, such that there exists a ball
$B(r,a)$ with
$\Vol(\Phi^{-1}(B^0(a,r)) \cap U)= \Vol_n(U)/(\tau + 2)$.

Let $d^0$ be the distance function on $M_0$ and define
$f(x) = d^0(a, x) |_{\Phi(U)} : \Phi(U) \rightarrow \mathbb{R}^+$
to be the distance from $x\in \Phi(U) \subset M$ to $a$.

\begin{eqnarray*}
    \int_r ^{2r} \Vol_{n-1}( (f \circ \Phi)^{-1} (t)) dt & = &  \int_{( f \circ \Phi)^{-1}(r,2r)} ||\nabla (f \circ \Phi)|| dV_{g}\\
    & \leq & \left(\int_{(f \circ \Phi)^{-1}(r,2r)} ||\nabla (f \circ \Phi)||^n dV_g\right)^{\frac{1}{n}} \left( \Vol_n((f \circ \Phi)^{-1}(r,2r) )\right)^\frac{n-1}{n}\\
    & \leq &  K ^{\frac{1}{n}} \left(\int_{f^{-1}(r,2r)}  ||\nabla^0 f||^n  dV_{g_0} \right)^{\frac{1}{n}} \left( \Vol_n((f \circ \Phi)^{-1}(r,2r) )\right)^\frac{n-1}{n}\\
    & \leq & 2 r \nu^{\frac{1}{n}} K^{\frac{1}{ n }} \Vol(U)^{\frac{ n-1 }{ n } }
\end{eqnarray*}

It follows that the average of $\Vol_{n-1}((f \circ \Phi)^{-1}(t))$ is smaller
than $2 \nu^{\frac{1}{n}} K^{\frac{1}{ n }} \Vol(U)^{\frac{ n-1 }{ n } }$. We then
take $\Sigma = (f \circ \Phi)^{-1}(t)$, with area at most average.
This finishes the proof of the analog of Theorem~\ref{thm: isoperimetric}.

The rest of the proof of Theorem~\ref{thm: mappings}
proceeds exactly as in Section~\ref{sec: proof of the width volume inequality}
with $c(n) \max\{1,\Vol^0_n(U)^{\frac{1}{n}}\}$ replaced by $2 r \nu^{\frac{1}{n}} K^{\frac{1}{ n }}$.
\end{proof}

We now recover Theorem~\ref{BS}.  Let $S_g$ denote a genus $g$ closed surface with a
complete Riemannian metric. We write $h$ for the metric on $S_g$.

The uniformization theorem for Riemannian surfaces guarantees that there is a metric $\phi h$ of constant sectional curvature 
in the conformal class of
$h$.  If $g = 0$ or $g=1$ then the result 
follows from Theorem \ref{thm: mappings}
by taking $M_0$ to be $S^2$, $\RP^2$, $T^2$ or
the Klein botle $K$ with the standard metric.
In all of these cases we have $\nu= \pi$ and $\tau = 6$
(see Remark~\ref{rem: refine BS} below).

Suppose now that the surface is orientable and the genuse $g>1$.
Take $\phi h$ to
have constant sectional curvature $\kappa = -1$.  We now apply the Riemann-Roch
theorem which gives a meromorphic function $\Phi: S_g \rightarrow S^2$ of
degree at most $g+1$. Since $\Phi$ is a ramified conformal covering map,
it has at most $g+1$ points in each fiber and there are finitely
many points where $d\Phi = 0$.
Applying Theorem~\ref{thm: mappings} to $\Phi$ gives a
width volume inequality for surfaces of genus $g > 1$, we obtain:
\[ \W(S_g) \leq \frac{8 \sqrt{\nu(S^2)}}
    {1-\sqrt{\frac{\tau(S^2)+1}{\tau(S^2)+2}}} \sqrt{(g+1)\Area(S_g)} \]


\begin{remark} \label{rem: refine BS}
Clearly $\nu(S^2) = \nu(\R^2) = \pi$.
It is well known that the smallest number of discs
of radius $1$ required to cover an annulus $B(2) \setminus B(1) \subset \R^2$
is $6$.
A similar covering also works on $S^2$ so $\tau(S^2)=\tau(\R^2)=6$.
With these values of $\tau$ and $\nu$ we compute 
$\frac{8 \sqrt{\nu(S^2)}}
    {1-\sqrt{\frac{\tau(S^2)+1}{\tau(S^2)+2}}}\leq 220$ which improves the upper bound $C \leq 10^8$ from \cite{BalacheffSabourau2010}.  \end{remark}

\section{Volumes of hypersurfaces}
\label{sec:Volumes of hypersurfaces}

In this section we prove Theorem \ref{hypersurfaces}.

\begin{theorem} \label{parametric width}
If $M$ is a manifold with non-negative Ricci curvature then 
$\W^k(M) \leq C(n) k^{\frac{1}{n}} \Vol(M)^{\frac{n-1}{n}}$.
\end{theorem}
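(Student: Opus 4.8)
The plan is to adapt the inductive subdivision argument of Section \ref{sec: proof of the width volume inequality} to the parametric setting, following the scheme Guth used in \cite{Guth2009} and Gromov in \cite{Gromov1988}. The key point is that on a manifold $M$ with $\Ricci \geq 0$, the Bishop--Gromov inequality gives a volume comparison with Euclidean space in \emph{both} directions at small scales, so $M$ behaves like a subset of $\R^n$ up to uniformly controlled error; in particular we can take $M_0 = M$, the function $\phi \equiv 1$, and $\MCV(M) = 0$, so that Theorem \ref{thm: isoperimetric} gives a subdividing hypersurface $\Sigma \subset U$ with $\Vol_{n-1}(\Sigma) \leq c(n)\Vol_n(U)^{\frac{n-1}{n}}$ and $\Vol_n(U_i) \geq (1 - 25^{-n})\Vol_n(U)$ for any open $U \subseteq M$, with no curvature-dependent constant. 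Iterating this subdivision, I would build a binary decomposition tree: starting from $M$, repeatedly bisect each piece until all leaves have volume at most $\Vol(M)/k$. Since each bisection shrinks volume by a definite factor, the tree has depth $O(\log k)$ and $O(k)$ leaves, each leaf $Q_j$ being an open region with $\Vol_n(Q_j) \leq \Vol(M)/k$ and $\Vol_{n-1}(\partial Q_j) \leq C(n)(\Vol(M)/k)^{\frac{n-1}{n}}$ (the boundary being assembled from the subdividing hypersurfaces along the path to the root, whose volumes form a geometric-type sum dominated by the largest term).

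The next step is to produce, for each leaf $Q_j$, a $1$-parameter $\partial$-sweep-out with max volume controlled by $\Vol_{n-1}(\partial Q_j)$. This is exactly Proposition \ref{prop: base of induction} once the volume of $Q_j$ is below the threshold $\epsilon(M,\epsilon_0)^n/C_1(n)$; to guarantee this I would first pass to a sufficiently large $k$ (small $k$ being absorbed into $C(n)$), or more carefully note that the subdivision can be continued on any piece until it fits inside an almost-Euclidean ball. Thus each $Q_j$ has $\W^\partial(Q_j) \leq C(n)(\Vol(M)/k)^{\frac{n-1}{n}}$. Now I would invoke the parametric assembly: given $\sim k$ regions each carrying a $\partial$-sweep-out of controlled width, one forms a $k$-parametric family on $M$ by taking, over the cube $[0,1]^{\sim k}$ (or a suitable polyhedral complex realizing the generator of $H_k(Z^0_{n-1};\Z_2)$), the sum of the individual sweep-out slices, adding in the fixed boundary cycles $\partial(\bigcup_{j \in S} Q_j)$ for the already-swept regions indexed by $S$. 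This is the higher-parametric analogue of Proposition \ref{decomposition}: the resulting map is nontrivial in $H_k(Z^0_{n-1};\Z_2)$ because restricting to each coordinate circle recovers a genuine sweep-out of $M$ (by the Mayer--Vietoris / Almgren-isomorphism argument in the proof of Proposition \ref{decomposition}), and the cohomological product of these one-dimensional classes is the generator. The volume of any slice of this $k$-parametric family is bounded by $\sum_j \W^\partial(Q_j)$ over the at most... no --- by the \emph{maximum} over active pieces plus accumulated boundary volumes, which is $\leq C(n)(\Vol(M)/k)^{\frac{n-1}{n}} + C(n)\, k \cdot (\Vol(M)/k)^{\frac{n-1}{n}}$; the dominant term is $C(n) k^{1/n}\Vol(M)^{\frac{n-1}{n}}$, giving the claimed bound.

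The main obstacle I anticipate is the last assembly step: making precise that the $k$-parametric family built from the leaf sweep-outs genuinely represents the nonzero class in $H_k(Z^0_{n-1}(M);\Z_2) \cong \Z_2$, and bounding the volumes of \emph{all} intermediate slices (not just the "pure" ones where a single region is being swept) by $C(n)k^{1/n}\Vol(M)^{\frac{n-1}{n}}$ rather than something that grows faster. Here I would rely on the fact that the subdividing hypersurfaces $\partial Q_j$ that get added are supported on the union of $O(k)$ pieces whose individual boundary volumes are each $O((\Vol(M)/k)^{\frac{n-1}{n}})$, so even summing all of them contributes $O(k^{1/n}\Vol(M)^{\frac{n-1}{n}})$; organizing the parametrization so that at any time only a bounded-overhead collection of boundary terms is "live" is the delicate bookkeeping. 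The cohomological nontriviality is handled exactly as in Gromov \cite{Gromov1988} and Guth \cite{Guth2009}: the $k$-parametric family is (homotopic to) a $k$-fold join / product of $1$-parametric sweep-outs, and the cup product of $k$ copies of the generator of $H^1(\RP^\infty;\Z_2)$ is nonzero.
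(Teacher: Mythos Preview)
Your decomposition step can be made to work, though it differs from the paper's: they do not iterate the isoperimetric bisection but instead use Bishop--Gromov directly to produce a Vitali-type covering of $M$ by at most $p$ balls $U_i$ with $\Vol_n(U_i)\leq C(n)\Vol(M)/p$ and $\Vol_{n-1}(\partial U_i)\leq C(n)(\Vol(M)/p)^{(n-1)/n}$. Note also that your claimed bound $\Vol_{n-1}(\partial Q_j)\leq C(n)(\Vol(M)/k)^{(n-1)/n}$ on an \emph{individual} leaf is not justified: the geometric series of cut areas along a root-to-leaf path is dominated by the \emph{root} cut, of order $\Vol(M)^{(n-1)/n}$, not the leaf cut. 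What does survive is the bound on the \emph{total} boundary $\Vol_{n-1}\bigl(\bigcup_j\partial Q_j\bigr)\leq C(n)k^{1/n}\Vol(M)^{(n-1)/n}$, and that is in fact all that is needed.

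The genuine gap is your assembly step, and it is the heart of the argument. A family over $[0,1]^{\sim k}$ (or a torus, or a join of circles) where the $j$-th coordinate runs the $\partial$-sweep-out of $Q_j$ does not detect $\lambda^k$: restricting to the $j$-th coordinate circle produces a family that sweeps out only $Q_j$ relative to $\partial Q_j$, not all of $M$, so its image under Almgren's isomorphism is trivial in $H_n(M)$ and $F^*(\lambda)$ vanishes on every factor. No product or join of such maps can be nonzero on $\lambda^k$. The paper's construction is different and is precisely the idea you are missing: first concatenate the piece-wise $\partial$-sweep-outs (via Proposition~\ref{decomposition}) into a \emph{single} $1$-parameter sweep-out $Z_t:S^1\to Z_{n-1}(M;\Z_2)$ of the whole manifold, and then define
\[
F:\ TP^p(S^1)\cong\RP^p\ \longrightarrow\ Z_{n-1}(M;\Z_2),\qquad F\Bigl(\sum_i a_i t_i\Bigr)=\sum_i a_i\,Z_{t_i},
\]
using Mostovoy's identification of the truncated symmetric product with $\RP^p$. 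The embedded circle $\{1\cdot t\}\subset TP^p(S^1)$ now maps to the sweep-out $Z_t$ of $M$, so $F^*(\lambda)\neq 0$ and hence $F^*(\lambda^p)\neq 0$. The volume bound works because each $Z_t$ splits as $Z_t^1+Z_t^2$ with $Z_t^1$ supported in the \emph{fixed} set $\bigcup_i\partial U_i$ and $\Vol(Z_t^2)\leq C(n)(\Vol(M)/p)^{(n-1)/n}$; summing $p$ such cycles mod~$2$, the $Z^1$ parts can only add up to $\Vol\bigl(\bigcup_i\partial U_i\bigr)\leq C(n)p^{1/n}\Vol(M)^{(n-1)/n}$, while the $Z^2$ parts contribute at most $p\cdot C(n)(\Vol(M)/p)^{(n-1)/n}=C(n)p^{1/n}\Vol(M)^{(n-1)/n}$. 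This combination of the symmetric-product parametrization with mod~$2$ cancellation on a fixed boundary set is exactly the mechanism your proposal lacks.
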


Note that Theorem \ref{parametric width} is consistent with 
the conjecture that the sequence of numbers $\W^k(M)$
obeys a Weyl type asymptotic formula (see \cite{Gromov1988} and 
the discussion in
\cite[\S 9]{MarquesNeves2013}).

To prove Theorem \ref{parametric width} we will 
need to decompose $M$ into open subsets of small sizes.
Similar arguments for bounding $\W^k$ have been used
by Gromov~\cite{Gromov1988},\cite{Gromov2003}
and Guth~\cite{Guth2009}.

\begin{lemma}
Let $M$ be a closed Riemannian manifold with
$\Ricci(M) \geq 0 $.
There exists a constant $C_4(n)$, such that for any $p$
there exists $p' \leq p$ and a collection of open balls
$\{U_i\}_{i=1} ^{p'} $
with $\bigcup U_i = M$, $\Vol_n (U_i) \leq C_4(n) \frac{\Vol_n (M)}{p}$ and
$\Vol_{n-1}(\partial U_i) \leq C_4(n) \left( \frac{\Vol_n(M)}{p} \right)^{\frac{n-1}{n}}$.
\end{lemma}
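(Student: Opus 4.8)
The plan is to produce the cover via a greedy packing argument combined with the Bishop–Gromov volume comparison theorem, which is available because $\Ricci(M) \geq 0$. First I would choose the scale: set $\rho$ to be the largest radius such that every metric ball of radius $\rho$ in $M$ has volume at most $c(n)\,\Vol_n(M)/p$ for a constant $c(n)$ to be fixed later (such a $\rho$ exists and is positive by compactness, and if no such $\rho$ is finite because $M$ itself is small then one can simply take the trivial cover by $M$). Then pick a maximal $\rho$-separated set $\{x_i\}_{i=1}^{p'}$ in $M$; maximality forces $\bigcup_i B(x_i,\rho) = M$, so this is our cover $U_i = B(x_i,\rho)$, and by construction $\Vol_n(U_i) \leq c(n)\Vol_n(M)/p$. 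The bound $p' \leq p$ will follow because the balls $B(x_i,\rho/2)$ are disjoint, so $\sum_i \Vol_n(B(x_i,\rho/2)) \leq \Vol_n(M)$; using Bishop–Gromov to compare $\Vol_n(B(x_i,\rho/2))$ with $\Vol_n(B(x_i,\rho))$ — the ratio is at least $2^{-n}$ under $\Ricci \geq 0$ — and using that $\rho$ is chosen so that $\Vol_n(B(x_i,\rho))$ is comparable to $\Vol_n(M)/p$ from below (by maximality of $\rho$, balls of radius slightly larger than $\rho$ exceed the threshold), one gets $p' \cdot c'(n)\Vol_n(M)/p \leq \Vol_n(M)$, hence $p' \leq p$ after absorbing constants into $C_4(n)$.

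The remaining and main point is the boundary estimate $\Vol_{n-1}(\partial U_i) \leq C_4(n)(\Vol_n(M)/p)^{(n-1)/n}$. Here I would not try to bound the boundary of the fixed ball $B(x_i,\rho)$ directly — that is uncontrolled — but instead use the coarea inequality to replace each $B(x_i,\rho)$ by a ball $B(x_i,r_i)$ with $r_i \in [\rho/2,\rho]$ whose boundary sphere has small $(n-1)$-volume. Concretely, $\int_{\rho/2}^{\rho}\Vol_{n-1}(\partial B(x_i,t))\,dt \leq \Vol_n(B(x_i,\rho)) \leq c(n)\Vol_n(M)/p$, so there is a value $r_i$ with $\Vol_{n-1}(\partial B(x_i,r_i)) \leq (2/\rho)\cdot c(n)\Vol_n(M)/p$. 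To convert this into the desired power $(\Vol_n(M)/p)^{(n-1)/n}$ I need a lower bound on $\rho$ in terms of $\Vol_n(M)/p$: this again comes from Bishop–Gromov, since $\Ricci \geq 0$ gives $\Vol_n(B(x,\rho)) \leq \omega_n \rho^n$, so the defining threshold $c(n)\Vol_n(M)/p$ is only reached at radius $\rho \geq (c(n)\Vol_n(M)/(p\omega_n))^{1/n}$, i.e. $\rho^{-1} \leq C(n)(p/\Vol_n(M))^{1/n}$. Plugging this into the boundary bound yields $\Vol_{n-1}(\partial B(x_i,r_i)) \leq C(n)(\Vol_n(M)/p)^{(n-1)/n}$, as required. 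After this replacement the balls of radius $r_i \geq \rho/2$ still cover $M$ (since the $x_i$ were only $\rho$-separated and maximal, one should instead take the $x_i$ maximal $\rho/2$-separated at the outset so that $\bigcup B(x_i,\rho/2) = M \subset \bigcup B(x_i,r_i)$), and the volume and count bounds are unaffected up to adjusting $C_4(n)$.

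The step I expect to be the main obstacle is the simultaneous calibration of the scale $\rho$: it must be large enough that finitely many (at most $p$) balls of radius $\sim\rho$ cover $M$, yet small enough that each such ball has volume $\lesssim \Vol_n(M)/p$, and one needs the two-sided comparison $\Vol_n(B(x_i,\rho)) \asymp \Vol_n(M)/p$ to make both the count $p' \leq p$ and the lower bound $\rho \gtrsim (\Vol_n(M)/p)^{1/n}$ work at once. All of this hinges on Bishop–Gromov, and the only subtlety is the degenerate regime where $p$ is so large (or $\Vol_n(M)$ so small) that $\rho$ would exceed $\operatorname{diam}(M)$; in that case one takes the single-ball cover $U_1 = M = B(x_1,\operatorname{diam}(M))$, notes $p'=1\leq p$, and checks the volume and boundary inequalities hold trivially after enlarging $C_4(n)$ to dominate $\Vol_n(M)^{1/n}\cdot\Vol_{n-1}(\partial M)/\Vol_n(M)$-type ratios, which are bounded in terms of $n$ alone on a fixed manifold — or, more cleanly, one simply states the lemma for $p$ in the range where $\Vol_n(M)/p$ is below the relevant injectivity-type threshold and absorbs the rest. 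I would handle this boundary case first to get it out of the way, then run the packing argument above in the main regime.
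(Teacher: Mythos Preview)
Your overall strategy---packing, Bishop--Gromov doubling, and coarea for the boundary---matches the paper's, but the fixed-radius setup creates a genuine gap in the count $p' \leq p$. You argue that the maximality of $\rho$ makes $\Vol_n(B(x_i,\rho))$ comparable to $\Vol_n(M)/p$ from below, but maximality only says that \emph{some} ball of radius slightly above $\rho$ exceeds the threshold; it says nothing about the balls centered at your packing points $x_i$. Under $\Ricci\geq 0$ the function $x\mapsto \Vol_n(B(x,\rho))$ is not uniformly bounded below by a dimensional constant times its maximum (think of a long thin flat torus, or an elongated ellipsoid): the ratio is governed by $(\rho/\operatorname{diam} M)^n$, which is exactly the uncontrolled quantity. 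So from $\sum_i \Vol_n(B(x_i,\rho/2)) \leq \Vol_n(M)$ you cannot extract $p'\leq p$; you only get $p' \leq C(n)\,\operatorname{diam}(M)^n\, p/\Vol_n(M)$, which is useless. (Incidentally, the degenerate case you flag is backwards: large $p$ makes $\rho$ small, not large.)

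The paper sidesteps this by letting the radius vary with the center: for each $x$ pick $r_x$ with $\Vol_n(B(x,r_x)) = \Vol_n(M)/p$ exactly, cover $M$ by finitely many such balls, and apply the Vitali covering lemma to extract a disjoint subfamily $\{B(x_i,r_i)\}$ whose triples $3B_i$ still cover. Now the count is immediate---the disjoint balls each have volume exactly $\Vol_n(M)/p$, so there are at most $p$ of them---and Bishop--Gromov gives $\Vol_n(3B_i)\leq 3^n \Vol_n(M)/p$. Your boundary estimate via coarea plus the Bishop inequality $\Vol_n(B(x,r))\leq \omega_n r^n$ is essentially what the paper does as well, so once you switch to variable radii the rest of your plan goes through.
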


\begin{proof}
It is a standard fact in comparison geometry
that for any ball $B(x,r) \subset M$ we have
$\Vol_n(B(x,3r)) \leq 3^n \Vol(B(x,r))$ and
$\Vol_{n-1}(\partial B(x,3r))\leq 3^{n-1} n \omega_n ^{\frac{1}{n}} 
\Vol_n(B(x,r))^{\frac{n-1}{n}}$.

Both of these bounds can be deduced, for example,
from the Bishop-Gromov inequality
\[\frac{\Vol_n (B(x,r-\epsilon))}{\omega_n (r-\epsilon)^n} 
\geq \frac{\Vol_n (B(x,r))}{\omega_n r^n} \]
where $\omega_n$ denotes the volume of a unit ball in
Euclidean $n$-space.

To prove the second bound observe that
$\Vol_{n} (B(x,r)  \setminus  B(r-\epsilon))
\leq \frac{n \epsilon}{r}\Vol_n (B(x,r)) + O(\epsilon^2)$.
Since $\Vol_n (B(x,r))\leq \omega_n r^n$ we can bound 
the volume of the annulus by 
$ n \omega_n ^{\frac{1}{n}} \epsilon 
(\Vol_n (B(r)))^{\frac{n-1}{n}} +  O(\epsilon^2)$.
Since $\Vol(3 B_i) \leq 3^n \Vol(B_i) $  we obtain that for every $\epsilon>0$
the volume of the annulus $B(x_i, 3r_i) \setminus B(x_i, 3r_i- \epsilon)$ is bounded by $3^{n-1} n \omega_n ^{\frac{1}{n}} \epsilon  \Vol(B(x_i, r_i))^{\frac{n-1}{n}}+O(\epsilon^2)$.
Hence, there must exist a sphere $S(x,r')$ in the annulus,
$3r-\epsilon \leq r' \leq 3r$,
with $\Vol_{n-1}(S(x,r')) \leq 3^{n-1} n \omega_n ^{\frac{1}{n}} \Vol_{n} (B(x,r))^{\frac{n-1}{n}}+O(\epsilon^2)$.
By curvature comparison again the volume of a sphere can not suddenly jump up.
Since $\epsilon$ was arbitrary we conclude
\[\Vol_{n-1}(\partial S(x,3r)) \leq3^{n-1} n \omega_n ^{\frac{1}{n}} 
\Vol_n(B(x,r))^{\frac{n-1}{n}}\]

Now we construct a covering of $M$ by disjoint balls of
volume $\frac{\Vol_n(M)}{p}$, such that balls of three times
the radius cover $M$.
This is also standard (see \cite{Gromov2006}).
For each $x$ choose $r_x>0$ to be the radius
of a ball $B(x,r_x)$, such that $\Vol_n(B(x,r_x))= \frac{\Vol_n(M)}{p}$.
By compactness there exists a finite subcollection of balls
$B(x,r_x)$ that cover $M$. By the Vitali covering lemma we can further
choose a subcollection of disjoint balls $B_1, \dots , B_k$ with radii
$r_1, \dots , r_{p'}$, such that balls of three times the radius cover $M$.
Note that we must have $p' \leq p$.
Theorem now follows by taking $U_i = 3 B_i$.

\end{proof}

By Theorem~\ref{thm: main with boundary} we have the following: 
for each open subset  $U$ of $M$ there exists a 
family of cycles $X_t$, for $0 \leq t \leq 1$, sweeping-out $U$. 
Moreover, we have that $X_0$ is a trivial cycle,
$X_1 = \partial U$ 
and $\Vol(X_t) \leq\Vol(\partial U) + C(n)\Vol(U)^{\frac{n-1}{n}}$.
For each $i$ we let $X^i _t$ be the family of cycles with the above properties
for the submanifold with boundary $U_i \setminus (\bigcup_{j=1} ^{i-1} U_j)$.
Let $V_i = \bigcup_{j=1} ^i U_j$ for $1 \leq i \leq p'$ and 
$V_i = \emptyset$ otherwise.
Define a family of mod 2 cycles $Z_t$ for $0 \leq t \leq p'$
by setting
$Z_t= \partial V_{i-1} + X^i_{t-[t]}$ for $i-1 \leq t \leq i$, here $[ t ]$ denotes the integer part of $t$.
We identify the endpoints (which are trivial cycles) and rescale 
so that $Z_t$ is parametrized by a unit circle.

Observe that for each $t$ cycle $Z_t$
can be decomposed into two $(n-1)$-cycles 
$Z_t = Z^{1}_t+Z^{2}_t$ with $Z^{1}_t \subset \bigcup \partial U_i$
and $\Vol (Z^{2}_t) \leq C_4(n) \left( \frac{\Vol(M)}{p}\right)^{\frac{n-1}{n}}$.

Following Gromov \cite{Gromov1988} and Guth \cite{Guth2009} 
we will now define
 a $p$-cycle $F: \RP^p \rightarrow Z_{n-1}(M,\mathbb{Z}_2)$
which detects the cohomology element $\lambda^p$. 
Consider a truncated symmetric product $TP^p(S^1)$.
Recall that the symmetric product is defined as a quotient
of $( S^1)^p$ by the symmetric group $S_p$.
The truncated symmetric product is then defined as a quotient
of the symmetric product by an equivalence relation
$1 s_j + 1 s_k=0$ if $s_j=s_k$ and $j \neq k$.
In \cite{Mostovoy1998} Mostovoy proved that $TP^p(S^1)$
is homeomorphic to $\RP^p$ (we learnt about this from \cite{Guth2009}).
We define 
$F(\sum_{i=1} ^p a_i t_i) = \sum_{i=1} ^p a_i Z_{t_i}$.
Alternatively, we could define a map from $\RP^p$
to $Z_{n-1}(M,\mathbb{Z}_2)$ with the desired property using zeros of 
real polynomials of one variable with degree at most $p$
as in \cite[4.2B]{Gromov1988} and \cite[Section 5]{Guth2009}.

We claim that $\Vol(F(x)) \leq C(n) p^{\frac{1}{n}} \Vol(M)^{\frac{n-1}{n}}$.
Indeed, we may decompose each of the $p$ summands 
$Z_{t_i} = Z^{1}_{t_i}+Z^{2}_{t_i}$ with $Z^{1}_{t_i}$
contained in the union of the boundaries of $U_i$'s 
and the volume of $Z^{2}_{t_i}$ bounded from above by
a constant times $( \frac{\Vol(M)}{p})^{\frac{n-1}{n}}$. 
Since we are dealing with mod 2 cycles the sum of all $ Z^{1}_{t_i}$ can not be greater than 
$\Vol(\bigcup \partial U_i) \leq C(n) p ^ {\frac{1}{n}}\Vol(M)^{\frac{n-1}{n}}$.
We also have that the sum $\sum Z^{2}_{t_i} \leq C(n) p ^ {\frac{1}{n}}\Vol(M)^{\frac{n-1}{n}}$.

Finally, we show that $F^*(\lambda ^p) = F^*(\lambda)^p \neq 0$.  
Observe that $S=\{1 \cdot t : t \in S^1 \} \subset  TP^p(S^1)$
represents a non-trivial homology class in $H_1(TP^p(S^1), \mathbb{Z}_2)$
and $F(S)=\{Z_t \}_{t \in S^1}$ is a sweep-out of $M$
by Proposition \ref{decomposition}.
It follows that $F^*(\lambda)=1$.
This finishes the proof of Theorem \ref{parametric width}.

We use this result to bound volumes of minimal hypersurfaces
in a space with positive Ricci curvature.
These minimal hypersurfaces arise from
Almgren-Pitts min-max theory
as supports of 
stationary almost minimizing integral varifolds.
Pitts~\cite{Pitts1981} and Schoen and Simon~\cite{SchoenSimon}
proved that these hypersurfaces are smooth embedded subamanifolds
when $n \leq 7$. In higher dimensions they may have singular
sets of dimension at most $n-8$.

Marques and Neves  \cite{MarquesNeves2013} showed that
every manifold $M$ of dimension $n$, for $3\leq n \leq 7$, with positive Ricci curvature
possesses infinitely many embedded minimal 
hypersurfaces. 
Theorem \ref{hypersurfaces} is an effective version of their result.
Note that for 2-dimensional surfaces 
an analogous result for periodic geodesics
is false. Morse showed that 
for an ellipsoid of area $1$ with distinct but
very close semiaxes
the length of the fourth shortest geodesic
becomes uncontrollably large (\cite{morse1934}).


\vspace{0.1in}

\noindent
\emph{Proof of Theorem \ref{hypersurfaces}.}
Let $\V_k$ be the infimum of numbers such that there exists $k$ distinct minimal
hypersurfaces of volume less or equal to $\V_k$. By \cite[Prop.
4.8]{MarquesNeves2013} and results in Section 2 of the same paper we may assume
that each parametric width $\W^k$ can be written as a finite linear combination
$\W^k = \sum a_j \V_j$, where $a_j$ are integer coefficients.  Moreover, when
$M$ has positive Ricci curvature (or, more generally whenever $M$ has the
property that any two embedded minimal hypersurfaces in $M$ intersect) we have
$\W^k =  a_{j_k} \V_{j_k}$ for some positive integer $a_{j_k}$.

Let $C=C(n)$ be the constant from Theorem \ref{parametric width} and define $C'
= 2^{\frac{1}{n-1}} C ^{\frac{n}{n-1}} $. We proceed by contradiction. Suppose
\[ \V_k > C'  \Vol(M)\left(\sys_{n-1}(M)\right)^{-\frac{1}{n-1}} k^ { \frac{1}{n-1}}\]
for some $k$.  Let $A(N) = \{\W^i \leq N\}$.  It follows from the proof of
Theorem 6.1 in \cite{MarquesNeves2013} that if $\W^i=\W^{i+1}$ for some $i$
then there exists infinitely many hypersurfaces of volume at most $\W^i$.
Hence, we may assume that $\W^i<\W^{i+1}$ for all $i <k$.  By Theorem
\ref{parametric width} we have that the number of elements in the set $A(N)$
satisfies $\#A(N) \geq \frac{N^n}{C^n \Vol(M)^{n-1}}-1$. In particular, we
compute $\#A(\V_k) \geq 2 \frac{k V_k}{V_1} -1$.
On the other hand, the set 
$ \{a_i \V_i : a_i \in \mathbb{N}, a_i \V_i \leq \V_k \}$ 
has at most $\frac{k \V_k}{\V_1}$ elements, 
which is a contradiction.


\bibliographystyle{abbrv}
\bibliography{bibliography}

\end{document}